\documentclass[11pt]{amsart}
\usepackage{amsmath,amsthm,amsfonts,amssymb,txfonts}

\newtheorem{lemma}{Lemma}
\newtheorem{theorem}{Theorem}
\newtheorem{corollary}{Corollary}
\newtheorem{remark}{Remark}
\newtheorem{proposition}{Proposition}
\newtheorem{example}{Example}
\def\bl{\begin{lemma}}
\def\bt{\begin{theorem}}
\def\el{\end{lemma}}
\def\et{\end{theorem}}
\def\bp{\begin{proof}}
\def\ep{\end{proof}}
\def\bc{\begin{corollary}}
\def\ec{\end{corollary}}
\def\bc{\begin{remark}}
\def\ec{\end{remark}}
\subjclass[2000]{32A10, 32A36, 51M25} \keywords{monotonic growth,
logarithmic convexity, mean mixed area, mean mixed length,
isoperimetric inequality, holomorphic map, univalent function}

\begin{document}

\title[Weighted Integral Means of Mixed Areas and Lengths]{Weighted Integral Means of Mixed\\
 Areas and Lengths under Holomorphic Mappings}
\author{Jie Xiao and Wen Xu}
\address{Department of Mathematics and Statistics,
         Memorial University,
         NL A1C 5S7, Canada}
         \email{jxiao@mun.ca; wenxupine@gmail.com}
\thanks{JX and WX were in part supported by NSERC of Canada and the Finnish Cultural Foundation, respectively.}

\begin{abstract}
This note addresses monotonic growths and logarithmic convexities of
the weighted ($(1-t^2)^\alpha dt^2$, $-\infty<\alpha<\infty$,
$0<t<1$) integral means $\mathsf{A}_{\alpha,\beta}(f,\cdot)$ and
$\mathsf{L}_{\alpha,\beta}(f,\cdot)$ of the mixed area $(\pi
r^2)^{-\beta}A(f,r)$ and the mixed length $(2\pi r)^{-\beta}L(f,r)$
($0\le\beta\le 1$ and $0<r<1$) of $f(r\mathbb D)$ and $\partial
f(r\mathbb D)$ under a holomorphic map $f$ from the unit disk
$\mathbb D$ into the finite complex plane $\mathbb C$.
\end{abstract}
 \maketitle
\section{Introduction}
From now on, $\mathbb D$ represents the unit disk in the finite
complex plane $\mathbb C$, $H(\mathbb D)$ denotes the space of
holomorphic mappings $f: \mathbb D\to\mathbb C$, and $U(\mathbb D)$
stands for all univalent functions in $H(\mathbb D)$. For any real
number $\alpha$, positive number $r\in (0,1)$ and the standard area
measure $dA$, let
$$
dA_\alpha(z)=(1-|z|^2)^\alpha dA(z);\quad r\mathbb D=\{z\in\mathbb
D: |z|<r\};\quad r\mathbb T=\{z\in\mathbb D: |z|=r\}.
$$

In their recent paper \cite{XZ}, Xiao and Zhu have discussed the
following area $0<p<\infty$-integral means of $f\in H(\mathbb D)$:
$$
{M}_{p,\alpha}(f,r)=\left[\frac{1}{A_\alpha(r\mathbb
D)}\int_{r\mathbb D}|f|^p\,dA_\alpha\right]^{\frac1p},
$$
proving that $r\mapsto M_{p,\alpha}(f,r)$ is strictly increasing
unless $f$ is a constant, and $\log r\mapsto\log M_{p,\alpha}(f,r)$
is not always convex. This last result suggests a conjecture that
$\log r\mapsto\log M_{p,\alpha}(f,r)$ is convex or concave when
$\alpha\le 0$ or $\alpha>0$. But, motivated by \cite[Example 10,
(ii)]{XZ} we can choose $p=2$, $\alpha=1$, $f(z)=z+c$ and $c>0$ to
verify that the conjecture is not true. At the same time, this
negative result was also obtained in Wang-Zhu's manuscript
\cite{WZ}. So far it is unknown whether the conjecture is generally
true for $p\not=2$.

The foregoing observation has actually inspired the following
investigation. Our concentration is the fundamental case $p=1$. To
understand this approach, let us take a look at
$M_{1,\alpha}(\cdot,\cdot)$ from a differential geometric viewpoint.
Note that
$$
{M}_{1,\alpha}(f',r)=\frac{\int_{r\mathbb
D}|f'|\,dA_\alpha}{A_\alpha(r\mathbb D)}=\frac{\int_0^r \big[(2\pi
t)^{-1}\int_{t\mathbb
T}|f'(z)||dz|\big](1-t^2)^\alpha\,dt^2}{\int_0^r
(1-t^2)^\alpha\,dt^2}.
$$
So, if $f\in U(\mathbb D)$, then
$$
(2\pi t)^{-1}\int_{t\mathbb T}|f'(z)|\,|dz|
$$
is a kind of mean of the length of $\partial f(t\mathbb D)$, and
hence the square of this mean dominates a sort of mean of the area
of $f(t\mathbb D)$ in the isoperimetric sense:
$$
\Phi_{A}(f,t)=(\pi t^2)^{-1}\int_{t\mathbb D}|f'(z)|^2\,dA(z)\le
\left[(2\pi t)^{-1}\int_{t\mathbb
T}|f'(z)|\,|dz|\right]^2=\big[\Phi_{L}(f,t)\big]^2.
$$
According to the P\'olya-Szeg\"o monotone principle \cite[Problem
309]{PS} (or \cite[Proposition 6.1]{BMM}) and the area Schwarz's
lemma in Burckel, Marshall, Minda, Poggi-Corradini and Ransford
\cite[Theorem 1.9]{BMM}, $\Phi_{L}(f,\cdot)$ and $\Phi_{A}(f,\cdot)$
are strictly increasing on $(0,1)$ unless $f(z)=a_1z$ with
$a_1\not=0$. Furthermore, $\log\Phi_{L}(f,r)$ and
$\log\Phi_{A}(f,r)$, equivalently, $\log L(f,r)$ and $\log A(f,r)$,
are convex functions of $\log r$ for $r\in (0,1)$, due to the
classical Hardy's convexity and \cite[Section 5]{BMM}. Perhaps, it
is worth-wise to mention that if $c>0$ is small enough then the
universal cover of $\mathbb D$ onto the annulus
$\{e^{-\frac{c\pi}{2}}<|z|< e^{\frac{c\pi}{2}}\}$:
$$
f(z)=\exp\Big[ic\log\Big(\frac{1+z}{1-z}\Big)\Big]
$$
enjoys the property that $\log r\mapsto \log A(f,r)$ is not convex;
see \cite[Example 5.1]{BMM}.

In the above and below, we have used the following convention:
$$
\Phi_{A}(f,r)=\frac{A(f,r)}{\pi r^2}\quad\&\quad
\Phi_{L}(f,r)=\frac{L(f,r)}{2\pi r},
$$
where under $r\in (0,1)$ and $f\in H(\mathbb D)$, $A(f,r)$ and
$L(f,r)$ stand respectively for the area of $f(r\mathbb D)$ (the
projection of the Riemannian image of $r\mathbb D$ by $f$) and the
length of $\partial f(r\mathbb D)$ (the boundary of the projection
of the Riemannian image of $r\mathbb D$ by $f$) with respect to the
standard Euclidean metric on $\mathbb C$. For our purpose, we choose
a shortcut notation
$$
d\mu_\alpha(t)=(1-t^2)^\alpha dt^2\quad\&\quad
\nu_\alpha(t)=\mu_\alpha([0,t])\quad\forall\quad t\in (0,1),
$$
and for $0\le\beta\le 1$ define
$$
\Phi_{A,\beta}(f,t)=\frac{A(f,t)}{(\pi t^2)^\beta}\quad\&\quad
\Phi_{L,\beta}(f,t)=\frac{L(f,t)}{(2\pi t)^\beta},
$$
and then
$$
\mathsf{A}_{\alpha,\beta}(f,r)=\frac{\int_0^r \Phi_{A,\beta}(f,t)
\,d\mu_\alpha(t)}{\int_0^r d\mu_\alpha(t)}\quad\&\quad
\mathsf{L}_{\alpha,\beta}(f,r)=\frac{\int_0^r \Phi_{L,\beta}(f,t)\,
d\mu_\alpha(t)}{\int_0^r d\mu_\alpha(t)}
$$
which are called the weighted integral means of the mixed area and
the mixed length for $f(r\mathbb D)$ and $\partial f(r\mathbb D)$,
respectively.

In this note, we consider two fundamental properties: monotonic
growths and logarithmic convexities of both
$\mathsf{A}_{\alpha,\beta}(f,r)$ and
$\mathsf{L}_{\alpha,\beta}(f,r)$, thereby producing two
specialities: (i) if $r\mapsto \Phi_{L}(f,r)$ is monotone increasing
on $(0,1)$, then so is the isoperimetry-induced function:
$$
r\mapsto\frac{\int_0^r
\big[\Phi_{L,1}(f,t)\big]^2\,d\mu_\alpha(t)}{\int_0^r
d\mu_\alpha(t)}\ge \mathsf{A}_{\alpha,1}(f,r);
$$
(ii) the log-convexity for $\mathsf{L}_{\alpha,\beta=1}(f,r)$
essentially settles the above-mentioned conjecture. The details
(results and their proofs) are arranged in the forthcoming two
sections.

\section{Monotonic Growth}

In this section, we deal with the monotonic growths of
$\mathsf{A}_{\alpha,\beta}(f,r)$ and
$\mathsf{L}_{\alpha,\beta}(f,r)$, along with their associated
Schwarz type lemmas. In what follows, $\mathbb N$ is used as the set
of all natural numbers.

\subsection{Two Lemmas} The following two preliminary results are
needed.

\begin{lemma}\cite[Theorems 1 \& 2]{Ma}\label{l2} Let $f\in H(\mathbb D)$ be of the form
$f(z)=a_0+\sum_{k=n}^\infty a_kz^k$ with $n\in\mathbb N$. Then:

\item{\rm(i)} $\pi r^{2n}\Big[\frac{|f^{(n)}(0)|}{n!}\Big]^2\le A(f,r)\quad\forall\quad r\in
(0,1)$.

\item{\rm(ii)} $2\pi r^n \Big[\frac{|f^{(n)}(0)|}{n!}\Big]\le L(f,r)\quad\forall\quad r\in
(0,1)$.

\noindent Moreover, equality in (i) or (ii) holds if and only if
$f(z)=a_0+a_nz^n$.
\end{lemma}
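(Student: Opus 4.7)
The plan is to reduce to $a_0=0$ by the translation invariance of both $A(f,r)$ and $L(f,r)$, and to assume $a_n:=f^{(n)}(0)/n!\neq 0$, since otherwise both right-hand sides vanish and there is nothing to prove. Writing $f(z)=z^n h(z)$ with $h$ holomorphic on $\mathbb D$ and $h(0)=a_n$, the local model near $z=0$ is the monomial $a_n z^n$, whose image of $r\mathbb D$ is a round disk of radius $|a_n|r^n$ covered $n$-to-$1$; this already explains both the numerical values on the right-hand sides and the extremal case.

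For part (i), the key will be pairing the Parseval identity
$$
\int_{r\mathbb D}|f'(z)|^2\,dA(z)=\pi\sum_{k\ge n}k|a_k|^2r^{2k}\ge n\pi|a_n|^2r^{2n}
$$
with the Banach indicatrix (area) formula
$$
\int_{r\mathbb D}|f'(z)|^2\,dA(z)=\int_\mathbb{C} N(w,r)\,dA(w),
$$
where $N(w,r)$ counts preimages in $r\mathbb D$ with multiplicity. By Rouch\'e applied to $f(z)-w$, for every $w$ close to $0$ one has $N(w,r)\ge n$; combining this with a decomposition of the image along the level sets $\{N(\cdot,r)\ge k\}$, I expect to descend to
$$
A(f,r)=\bigl|\{w:N(w,r)\ge 1\}\bigr|\ge\pi|a_n|^2r^{2n}.
$$
The equality case then forces $a_k=0$ for all $k>n$ via the Parseval bound, i.e.\ $f(z)=a_0+a_nz^n$.

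Once (i) is in place, part (ii) will follow from the classical isoperimetric inequality applied to the bounded open set $U=f(r\mathbb D)$:
$$
L(f,r)^2\ge 4\pi\,A(f,r)\ge 4\pi\cdot\pi|a_n|^2r^{2n}=(2\pi|a_n|r^n)^2.
$$
Equality in isoperimetry forces $U$ to be a round disk, which together with the equality case of (i) leaves only $f(z)=a_0+a_nz^n$.

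The delicate step I anticipate is the multiplicity-based descent in (i) from the Riemannian-area bound $n\pi|a_n|^2r^{2n}$ to the image-area bound $\pi|a_n|^2r^{2n}$: the prefactor gain of $n$ must be absorbed by the overcounting inherent in $N(w,r)$, which requires careful control, via the argument principle, of the portion of the image where $N$ exceeds $n$.
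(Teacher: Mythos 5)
There is a genuine gap in your part (i), precisely at the step you yourself flag as ``delicate'': it is not a technical point to be cleaned up later but the entire content of the theorem. Writing $N(w,r)$ for the number of solutions of $f(z)=w$ in $r\mathbb D$ counted with multiplicity, the layer--cake decomposition gives
$$
\int_{\mathbb C}N(w,r)\,dA(w)=\sum_{k\ge1}\bigl|\{w:N(w,r)\ge k\}\bigr|\;\ge\;\bigl|\{w:N(w,r)\ge1\}\bigr|=A(f,r),
$$
so the Parseval identity bounds from below a quantity that already \emph{dominates} $A(f,r)$; no lower bound on $\int N\,dA$, however large, can descend to a lower bound on $A(f,r)$ without a new idea. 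The factor $n$ you hope to absorb by overcounting is illusory: for $n=1$ there is no spare factor at all, and the claim $A(f,r)\ge\pi|f'(0)|^2r^{2}$ is already the Alexander--Taylor--Ullman inequality, a nontrivial theorem that cannot be obtained from Parseval plus the argument principle. The Rouch\'e observation only yields $N\ge n$ on some unquantified neighbourhood of $a_0$, which carries no area information. Since your chain of inequalities never actually connects $A(f,r)$ to the Parseval sum, the asserted equality analysis ($a_k=0$ for $k>n$) is also unsupported. To close the gap you need a different mechanism: the Cauchy-transform argument of Alexander--Taylor--Ullman (the function $w\mapsto\frac1\pi\int_{f(r\mathbb D)}\frac{dA(\zeta)}{\zeta-w}$ is bounded by $2\sqrt{A(f,r)/\pi}$ and differs from $-\bar w$ by a holomorphic function on the image, from which one extracts the Taylor coefficient), or the monotonicity of $r\mapsto A(f,r)/(\pi r^{2})$ from \cite[Theorem 1.9]{BMM}, or Macgregor's original argument; see also \cite[Corollary 3]{J}, which the paper cites exactly for the area case.

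For what it is worth, the paper itself does not prove this lemma but quotes it from \cite[Theorems 1--2]{Ma}, so you are attempting something the authors deliberately outsourced. Your reduction of (ii) to (i) via the isoperimetric inequality is correct and is a genuine shortcut relative to Macgregor's direct estimate of the outer boundary: $L(f,r)^2\ge4\pi A(f,r)$ holds for the (outer or full) boundary of $f(r\mathbb D)$, and equality in (ii) then forces equality in (i), so the equality case of (ii) does collapse onto that of (i). But this only transfers the burden; everything still rests on a proof of (i) that you do not yet have.
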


\begin{proof} This may be viewed as the higher order Schwarz type lemma for area and length.
See also the proofs of Theorems 1 \& 2 in \cite{Ma}, and their
immediate remarks on equalities. Here it is worth noticing three
matters: (a) $\frac{f^{(n)}(0)}{n!}$ is just $a_n$; (b)
\cite[Corollary 3]{J} presents a different argument for the area
case; (c) $L(f,r)$ is greater than or equal to the length $l(r,f)$
of the outer boundary of $f(r\mathbb D)$ (defined in \cite{Ma})
which is not less than the length $l^\#(r,f)$ of the exact outer
boundary of $f(r\mathbb D)$ (introduced in \cite{Y}).
\end{proof}

\begin{lemma}\label{l1} Let $0\le\beta\le 1$.

\item{\rm(i)} If $f\in H(\mathbb D)$, then $r\mapsto \Phi_{A,\beta}(f,r)$ is strictly
increasing on $(0,1)$ unless
\[
f=\left\{\begin{array} {r@{\;}l}
constant &\quad \hbox{when}\quad \beta<1\\
linear\ map &\quad \hbox{when}\quad \beta=1.
\end{array}
\right.
\]

\item{\rm(ii)} If $f\in U(\mathbb D)$ or $f(z)=a_0+a_nz^n$ with $n\in\mathbb N$, then $r\mapsto
\Phi_{L,\beta}(f,r)$ is strictly increasing on $(0,1)$ unless
\[
f=\left\{\begin{array} {r@{\;}l}
constant &\quad \hbox{when}\quad \beta<1\\
linear\ map & \quad \hbox{when}\quad \beta=1.
\end{array}
\right.
\]

\end{lemma}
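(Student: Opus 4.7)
The plan is to exploit the elementary factorizations
\[
\Phi_{A,\beta}(f,r)=(\pi r^{2})^{1-\beta}\,\Phi_{A}(f,r)\qquad\text{and}\qquad
\Phi_{L,\beta}(f,r)=(2\pi r)^{1-\beta}\,\Phi_{L}(f,r),
\]
which reduce each claim to the endpoint $\beta=1$ together with an explicit positive, strictly increasing factor carrying the slack $1-\beta$ when $\beta<1$.

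For part (i), the endpoint $\beta=1$ is precisely the area Schwarz lemma recalled in the introduction: by the P\'olya--Szeg\"o monotone principle \cite[Problem 309]{PS} combined with \cite[Theorem 1.9]{BMM}, $r\mapsto\Phi_A(f,r)$ is non-decreasing on $(0,1)$ and strictly increasing unless $f$ is an affine map. When $\beta<1$, the factor $(\pi r^2)^{1-\beta}$ is smooth, positive, and strictly increasing on $(0,1)$, while $\Phi_A(f,\cdot)$ is non-decreasing and vanishes identically exactly when $f$ is constant. Multiplying the two yields strict monotonicity unless $f$ is constant, which together with the $\beta=1$ case gives the stated dichotomy.

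For part (ii) with $f\in U(\mathbb D)$, univalence gives $\partial f(r\mathbb D)=f(r\mathbb T)$, so $L(f,r)=r\int_0^{2\pi}|f'(re^{i\theta})|\,d\theta$ and hence $\Phi_L(f,r)=M_1(f',r)$. By Hardy's classical convexity theorem, $M_1(f',\cdot)$ is non-decreasing and log-convex in $\log r$, and is strictly increasing unless $f'$ is constant (i.e., $f$ is linear); the same factorization argument as in (i) then handles every $\beta\in[0,1]$, using that a univalent map on $\mathbb D$ is automatically non-constant. The monomial case $f(z)=a_0+a_nz^n$ with $n\in\mathbb N$ is handled by direct calculation: $f(r\mathbb D)$ is the disk of radius $|a_n|r^n$ centred at $a_0$, so $L(f,r)=2\pi|a_n|r^n$ and $\Phi_{L,\beta}(f,r)=(2\pi)^{1-\beta}|a_n|\,r^{\,n-\beta}$, which fails to be strictly increasing only when $a_n=0$ (so $f$ is constant) or $n-\beta=0$ (which forces $\beta=n=1$, so $f$ is linear).

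The main obstacle is securing the two endpoint monotonicities with the sharp equality cases for \emph{every} $f\in H(\mathbb D)$ rather than only in the univalent framework of the introduction: namely, that $\Phi_A(f,\cdot)$ coming from the area Schwarz lemma has no plateau on $(0,1)$ unless $f$ is affine, and that $r\mapsto M_1(f',r)$ is constant only when $f'$ is. Both reduce to the strong maximum principle applied to the subharmonic functions $|f'|^2$ and $|f'|$ respectively; once they are in hand, the monotone factorization above turns the weighted claims into straightforward bookkeeping.
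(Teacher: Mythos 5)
Your proof is correct and follows the same overall strategy as the paper: factor out $(\pi r^2)^{1-\beta}$ (resp.\ $(2\pi r)^{1-\beta}$) to reduce to the endpoint $\beta=1$, which is quoted from \cite[Theorem 1.9 \& Proposition 6.1]{BMM}, and then argue strictness separately for $\beta<1$. The one place where you genuinely diverge is the strictness mechanism for $\beta<1$: the paper differentiates, reduces a hypothetical plateau to the identity $2\beta A(f,r)=r\frac{d}{dr}A(f,r)$, and contradicts the differential inequality $2A(f,r)\le r\frac{d}{dr}A(f,r)$ of \cite[(4.2)]{BMM} (and, for lengths, runs a first-variation computation on $r\int_0^{2\pi}|f'(re^{i\theta})|\,d\theta$); you instead observe that for non-constant $f$ the open mapping theorem gives $A(f,r)>0$ for every $r$, so a strictly increasing positive prefactor times a positive non-decreasing factor is strictly increasing. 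Your route is more elementary and avoids the variational identities entirely; the paper's route has the minor advantage of not needing positivity and of being the template reused in the proof of Theorem \ref{th1}. Two small points to tighten: (a) you need $\Phi_{A}(f,r)>0$ for \emph{every} $r$, not merely that it does not vanish identically, so say explicitly that this follows from the open mapping theorem; (b) your closing remark that the equality case of the area Schwarz lemma ``reduces to the strong maximum principle applied to the subharmonic function $|f'|^2$'' conflates $A(f,r)$ (the area of the projected image, used here) with $\int_{r\mathbb D}|f'|^2\,dA$ (area counted with multiplicity); the former is the genuinely harder statement of \cite[Theorem 1.9]{BMM} and does not follow from subharmonicity alone, though since you cite that theorem rather than reprove it, this does not affect the validity of your argument.
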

\begin{proof} It is enough to handle $\beta<1$ since the case $\beta=1$ has been treated in \cite[Theorem 1.9 \& Proposition 6.1]{BMM}.
The monotonic growths in (i) and (ii) follow from
$$
\Phi_{A,\beta}(f,r)=(\pi r^2)^{1-\beta}\Phi_{A,1}(f,r)\quad\&\quad
L(f,r)=(2\pi r)^{1-\beta}\Phi_{L,1}(f,r).
$$

To see the strictness, we consider two cases.

(i) Suppose that $\Phi_{A,\beta}(f,\cdot)$ is not strictly
increasing. Then there are $r_1,r_2\in (0,1)$ such that $r_1<r_2$,
and $\Phi_{A,\beta}(f,\cdot)$ is a constant on $[r_1,r_2]$. Hence
$$
\frac{d}{dr}\Phi_{A,\beta}(f,r)=0\quad\forall\quad r\in [r_1,r_2].
$$
Equivalently,
$$
2\beta A(f,r)=r\frac{d}{dr}A(f,r)\quad\forall\quad r\in [r_1,r_2].
$$
But, according to \cite[(4.2)]{BMM}:
$$
2A(f,r)\le r\frac{d}{dr} A(f,r)\quad\forall\quad r\in (0,1).
$$
Since $\beta<1$, we get $A(f,r)=0$ for all $r\in [r_1,r_2]$, whence
finding that $f$ is constant.

(ii) Now assume that $\Phi_{L,\beta}(f,\cdot)$ is not strictly
increasing. There are $r_3,r_4\in (0,1)$ such that $ r_3<r_4$ and
$$
0=\frac{d}{dr}\Phi_{L,\beta}(f,r)=(2\pi
r)^{-\beta}\Big[\frac{d}{dr}L(f,r)-\frac{\beta}{r}L(f,r)\Big]=0\quad\forall\quad
r\in [r_3,r_4].
$$
If $f\in U(\mathbb D)$ then
$$
L(f,r)=\int_{r\mathbb T}|f'(z)|\,|dz|
$$
and hence one has the following ``first variation formula"
$$
\frac{d}{dr}L(f,r)=\int_0^{2\pi}|f'(re^{i\theta})|d\theta+r\frac{d}{dr}\int_0^{2\pi}|f'(re^{i\theta})|d\theta\quad\forall\quad
r\in [r_3,r_4].
$$
The previous three equations yield
$$
0=(1-\beta)\int_0^{2\pi}|f'(re^{i\theta})|d\theta+r\frac{d}{dr}\int_0^{2\pi}|f'(re^{i\theta})|d\theta\quad\forall\quad
r\in [r_3,r_4]
$$
and so
$$
\int_0^{2\pi}|f'(re^{i\theta})|d\theta=0\quad\forall\quad r\in
[r_3,r_4].
$$
This ensures that $f$ is a constant, contradicting $f\in U(\mathbb
D)$. Therefore, $f(z)$ is of the form $a_0+a_nz^n$. But, since
$L(z^n,r)=2\pi r^n$ is strictly increasing, $f$ must be constant.
\end{proof}

\subsection{Monotonic Growth of $\mathsf{A}_{\alpha,\beta}(f,\cdot)$} This
aspect is essentially motivated by the following Schwarz type lemma.

\begin{proposition}\label{pr1} Let $-\infty<\alpha<\infty$, $0\le\beta\le 1$, and $f\in H(\mathbb D)$
be of the form $f(z)=a_0+\sum_{k=n}^\infty a_k z^k$ with $n\in\mathbb N$.
Then
$$
\pi^{1-\beta}\Big[\frac{|f^{(n)}(0)|}{n!}\Big]^2\le
\mathsf{A}_{\alpha,\beta}(f,r)\left[\frac{\nu_\alpha(r)}{\int_0^rt^{2(n-\beta)}\,d\mu_\alpha(t)}\right]\quad\forall\quad
r\in (0,1)
$$
with equality if and only if $f(z)=a_0+a_nz^n$.
\end{proposition}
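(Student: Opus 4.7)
The plan is to reduce the integral inequality to the pointwise Schwarz-type lemma (Lemma \ref{l2}(i)) by an averaging argument against the measure $d\mu_\alpha$. Concretely, for every $t\in(0,r)$ Lemma \ref{l2}(i) gives
$$
\pi t^{2n}\Big[\tfrac{|f^{(n)}(0)|}{n!}\Big]^{2}\le A(f,t),
$$
so dividing both sides by $(\pi t^{2})^{\beta}$ yields the pointwise bound
$$
\pi^{1-\beta}t^{2(n-\beta)}\Big[\tfrac{|f^{(n)}(0)|}{n!}\Big]^{2}\le \Phi_{A,\beta}(f,t).
$$

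Next I would integrate this inequality with respect to $d\mu_\alpha(t)$ over $[0,r]$. The integrals $\int_0^r t^{2(n-\beta)}d\mu_\alpha(t)$ and $\nu_\alpha(r)=\int_0^r d\mu_\alpha(t)$ are both positive and finite for each $r\in(0,1)$, since $d\mu_\alpha(t)=2t(1-t^{2})^\alpha dt$ is integrable near $t=0$ (the factor $2t$ absorbs any exponent concern) and $(1-t^{2})^\alpha$ stays bounded on $[0,r]$. Dividing the integrated inequality by $\nu_\alpha(r)$ produces
$$
\pi^{1-\beta}\Big[\tfrac{|f^{(n)}(0)|}{n!}\Big]^{2}\,\frac{\int_0^r t^{2(n-\beta)}\,d\mu_\alpha(t)}{\nu_\alpha(r)}\le \mathsf{A}_{\alpha,\beta}(f,r),
$$
which is exactly the desired estimate after a trivial rearrangement.

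For the equality clause, the ``if'' direction is a direct computation: when $f(z)=a_0+a_nz^n$, the image $f(r\mathbb D)$ is a disk of radius $|a_n|r^{n}$ so $A(f,t)=\pi t^{2n}|a_n|^{2}$ exactly, pointwise equality holds in Lemma \ref{l2}(i), and the averaging preserves this. Conversely, if equality holds in the conclusion, then equality must hold after integration, and since $\Phi_{A,\beta}(f,t)-\pi^{1-\beta}t^{2(n-\beta)}|a_n|^{2}\ge 0$ is continuous in $t$ while $d\mu_\alpha$ has a strictly positive density on $(0,r)$, the non-negative integrand must vanish identically on $[0,r]$. Applying the equality clause of Lemma \ref{l2}(i) at any single $t\in(0,r)$ then forces $f(z)=a_0+a_nz^n$.

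I do not anticipate a real obstacle: this is a weighted-averaging argument on top of a pointwise Schwarz-type bound. The only mild subtlety is the equality case, where one must argue that equality in an integrated inequality forces pointwise equality; the continuity of $A(f,\cdot)$ and the positivity of the density of $d\mu_\alpha$ on $(0,r)$ make this transparent, so Lemma \ref{l2}(i)'s rigidity statement applies directly.
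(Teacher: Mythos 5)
Your proposal is correct and follows essentially the same route as the paper: both reduce the claim to the pointwise Schwarz-type bound of Lemma \ref{l2}(i), integrate against $d\mu_\alpha$ over $[0,r]$, and settle the equality case by observing that a nonnegative continuous integrand with a positive weight must vanish identically once the integrated inequality is an equality (the paper phrases this contrapositively, via strict inequality on a subinterval, but the content is identical). No gaps.
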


\begin{proof} The inequality follows from Lemma \ref{l2} (i) right away. When $f(z)=a_0+a_nz^n$,
the last inequality becomes equality due to the equality case of
Lemma \ref{l2} (i). Conversely, suppose that the last inequality is
an equality. If $f$ does not have the form $a_0+a_nz^n$, then the
equality in Lemma \ref{l2} (i) is not true, then there are
$r_1,r_2\in (0,1)$ such that $r_1<r_2$ and
$$
A(f,t)>\pi
t^{2n}\Big[\frac{|f^{(n)}(0)|}{n!}\Big]^2\quad\forall\quad t\in
[r_1,r_2].
$$
This strict inequality forces that for $r\in [r_1,r_2]$,
\begin{eqnarray*}
\pi^{1-\beta}\Big[\frac{|f^{(n)}(0)|}{n!}\Big]^2\int_0^r
t^{2(n-\beta)}\,d\mu_\alpha(t)&=&\int_0^r
(\pi t^2)^{-\beta}A(f,t)\,d\mu_\alpha(t)\\
&=&\left(\int_0^{r_1}+\int_{r_1}^{r_2}+\int_{r_2}^{r}\right)(\pi t^2)^{-\beta} A(f,t)\,d\mu_\alpha(t)\\
&>&\pi^{1-\beta} \Big[\frac{|f^{(n)}(0)|}{n!}\Big]^2 \int_0^{r}
t^{2(n-\beta)}\,d\mu_\alpha(t),
\end{eqnarray*}
a contradiction. Thus $f(z)=a_0+a_nz^n$.
\end{proof}

Based on Proposition \ref{pr1}, we find the monotonic growth for
$\mathsf{A}_{\alpha,\beta}(\cdot,\cdot)$ as follows.

\begin{theorem}\label{th1}
Let $-\infty<\alpha<\infty$, $0\le\beta\le 1$, and $f\in H(\mathbb
D)$. Then $r\mapsto\mathsf{A}_{\alpha,\beta}(f,r)$ is strictly
increasing on $(0,1)$ unless
\[
f=\left\{\begin{array} {r@{\;}l}
constant &\quad \hbox{when}\quad \beta<1\\
linear\ map &\quad \hbox{when}\quad \beta=1.
\end{array}
\right.
\]
Consequently,

\item{\rm(i)}
\[
\lim_{r\to 0}\mathsf{A}_{\alpha,\beta}(f,r)=\left\{\begin{array}
{r@{\;}l}
0\quad & \hbox{when}\quad \beta<1\\
|f'(0)|^2\quad & \hbox{when}\quad \beta=1.
\end{array}
\right.
\]

\item{\rm(ii)} If
$$
\Phi_{A,\beta}(f,0):=\lim_{r\to
0}\Phi_{A,\beta}(f,r)\quad\&\quad\Phi_{A,\beta}(f,1):=\lim_{r\to
1}\Phi_{A,\beta}(f,r)<\infty,
$$
then
$$
0<r<s<1\Rightarrow 0\le
\frac{\mathsf{A}_{\alpha,\beta}(f,s)-\mathsf{A}_{\alpha,\beta}(f,r)}{\log\nu_\alpha(s)-\log\nu_\alpha(r)}\leq
\Phi_{A,\beta}(f,s)-\Phi_{A,\beta}(f,0)
$$
with equality if and only if
\[
f=\left\{\begin{array} {r@{\;}l}
\hbox{constant}\quad & \hbox{when}\quad \beta<1\\
\hbox{linear\ map}\quad & \hbox{when}\quad \beta=1.
\end{array}
\right.
\]
In particular, $t\mapsto \mathsf{A}_{\alpha,\beta}(f,t)$ is
Lipschitz with respect to $\log\nu_\alpha(t)$ for $t\in (0,1)$.

\end{theorem}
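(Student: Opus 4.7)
The plan is to realize $\mathsf{A}_{\alpha,\beta}(f,r)$ as a weighted average of $\Phi_{A,\beta}(f,\cdot)$ against $d\mu_\alpha$ over $[0,r]$ and to reduce every assertion to the monotonicity of $\Phi_{A,\beta}(f,\cdot)$ supplied by Lemma \ref{l1}(i). With $\nu_\alpha'(t)=2t(1-t^2)^\alpha>0$, the quotient rule applied to the definition gives
$$\frac{d}{dr}\mathsf{A}_{\alpha,\beta}(f,r)=\frac{\nu_\alpha'(r)}{\nu_\alpha(r)}\bigl[\Phi_{A,\beta}(f,r)-\mathsf{A}_{\alpha,\beta}(f,r)\bigr].$$
Since $\mathsf{A}_{\alpha,\beta}(f,r)$ is a weighted average of the integrand over $(0,r)$, and Lemma \ref{l1}(i) makes $\Phi_{A,\beta}(f,\cdot)$ strictly increasing on $(0,1)$ outside the degenerate class (constant if $\beta<1$, linear if $\beta=1$), the bracket is strictly positive away from that class---this is the concrete content of the P\'olya-Szeg\"o monotone principle recalled in the introduction---while in the degenerate case $\Phi_{A,\beta}(f,\cdot)$ is constant, forcing $\mathsf{A}_{\alpha,\beta}(f,\cdot)$ to be constant too. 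This yields the strict monotonicity claim.

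For consequence (i), I would use the factorization $\Phi_{A,\beta}(f,t)=(\pi t^2)^{1-\beta}\Phi_{A,1}(f,t)$ together with the elementary limit $\Phi_{A,1}(f,t)\to|f'(0)|^2$ as $t\to 0^+$ (immediate from the Taylor expansion of $|f'|^2$ at the origin, or from the area Schwarz lemma \cite[Theorem 1.9]{BMM}). When $\beta<1$ this gives $\Phi_{A,\beta}(f,t)\to 0$, and when $\beta=1$ it gives $\Phi_{A,\beta}(f,t)\to|f'(0)|^2$; either way the weighted average inherits the limit of its integrand at the left endpoint by a standard continuity argument for averages.

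For consequence (ii), I would apply Cauchy's mean value theorem with $\mathsf{A}_{\alpha,\beta}(f,\cdot)$ in the numerator and $\log\nu_\alpha(\cdot)$ in the denominator; the derivative formula above yields, for some $\xi\in(r,s)$,
$$\frac{\mathsf{A}_{\alpha,\beta}(f,s)-\mathsf{A}_{\alpha,\beta}(f,r)}{\log\nu_\alpha(s)-\log\nu_\alpha(r)}=\Phi_{A,\beta}(f,\xi)-\mathsf{A}_{\alpha,\beta}(f,\xi).$$
The lower bound is then the strict monotonicity proved above, while the upper bound splits as $\Phi_{A,\beta}(f,\xi)\le\Phi_{A,\beta}(f,s)$ (Lemma \ref{l1}(i) monotonicity) together with $\mathsf{A}_{\alpha,\beta}(f,\xi)\ge\Phi_{A,\beta}(f,0)$, since the same monotonicity makes $\Phi_{A,\beta}(f,0)$ a uniform lower bound on the averaged integrand. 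The Lipschitz conclusion follows from the assumed finiteness of $\Phi_{A,\beta}(f,1)$. I expect the main obstacle to be the sharp equality analysis: one has to show that any equality in the chain forces $\Phi_{A,\beta}(f,\cdot)$ to be constant on a subinterval, which by the strict part of Lemma \ref{l1}(i) forces $f$ into the degenerate class, while conversely the degenerate choices make every quantity in the chain a constant so that both inequalities saturate trivially.
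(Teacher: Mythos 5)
Your proposal is correct and follows essentially the same route as the paper: both differentiate the weighted mean to obtain $\frac{d}{dr}\mathsf{A}_{\alpha,\beta}(f,r)=\frac{\nu_\alpha'(r)}{\nu_\alpha(r)}\big[\Phi_{A,\beta}(f,r)-\mathsf{A}_{\alpha,\beta}(f,r)\big]$ (the paper expresses the bracket, via integration by parts, as $\nu_\alpha(r)^{-1}\int_0^r\big[\frac{d}{dt}\Phi_{A,\beta}(f,t)\big]\nu_\alpha(t)\,dt$) and reduce every claim, including strictness and the equality cases, to the monotonicity of $\Phi_{A,\beta}(f,\cdot)$ from Lemma~\ref{l1}(i). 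The only cosmetic difference is in part (ii), where you invoke Cauchy's mean value theorem while the paper integrates the same derivative identity from $r$ to $s$ and bounds the integrand; both give the identical estimate.
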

\begin{proof} Note that $\nu_\alpha(r)=\int_0^r d\mu_\alpha(t)$. So $d\nu_\alpha(r)$, the differential of $\nu_\alpha(r)$ with respect to $r\in (0,1)$, equals
$d\mu_\alpha(r)$.  By integration by parts we have
$$
\Phi_{A,\beta}(f,r)\nu_\alpha(r)-\int_0^r
\Phi_{A,\beta}(f,t)\,d\mu_\alpha(t)=\int_0^r\big[\frac{d}{dt}\Phi_{A,\beta}(f,t)\big]
\nu_\alpha(t)\,dt.
$$
Differentiating the function $\mathsf{A}_{\alpha,\beta}(f,r)$ with
respect to $r$ and using Lemma \ref{l1} (i), we get
\begin{align*}
\frac{d}{dr}\mathsf{A}_{\alpha,\beta}(f,r)&=\frac{\Phi_{A,\beta}(f,r)2r(1-r^2)^\alpha \nu_\alpha(r)-\Big[\int_0^r\Phi_{A,\beta}(f,t)\, d\mu_\alpha(t)\Big]2r(1-r^2)^\alpha}{\nu_\alpha(r)^2}\\
&=\frac{2r(1-r^2)^\alpha \left[\Phi_{A,\beta}(f,t)\nu_\alpha(r)- \int_0^r \Phi_{A,\beta}(f,t)\, d\mu_\alpha(t)\right]}{\nu_\alpha(r)^2}\\
&=\frac{2r(1-r^2)^\alpha\int_0^r\big[\frac{d}{dt}\Phi_{A,\beta}(f,t)\big]
\nu_\alpha(t)\, dt}{\nu_\alpha(r)^2}\geq 0.
\end{align*}
As a result,
$r\mapsto\mathsf{A}_{\alpha,\beta}(f,r)$ increases on $(0,1)$.

Next suppose that the just-verified monotonicity is not strict. Then
there exist two numbers $r_1,r_2\in (0,1)$ such that $r_1<r_2$ and
$$
\mathsf{A}_{\alpha,\beta}(f,r_1)=\mathsf{A}_{\alpha,\beta}(f,r)=\mathsf{A}_{\alpha,\beta}(f,r_2)\quad
\forall\quad r\in [r_1,r_2].
$$
Consequently,
$$
\frac{d}{dr}\mathsf{A}_{\alpha,\beta}(f,r)=0\quad\forall\quad
r\in[r_1,r_2]
$$
and so
$$
\int_0^r \big[\frac{d}{dt}\Phi_{A,\beta}(f,t)\big]\nu_\alpha(t)\,
dt=0\quad\forall\quad r\in [r_1,r_2].
$$
Then we must have
$$
\frac{d}{dt}\Phi_{A,\beta}(f,t)=0\quad\forall\quad t\in
(0,r)\quad\hbox{with}\quad r\in [r_1,r_2],
$$
whence getting that if $\beta<1$ then $f$ must be constant or if
$\beta=1$ then $f$ must be linear, thanks to the argument for the
strictness in Lemma \ref{l1} (i).

It remains to check the rest of Theorem \ref{th1}.

(i) The monotonic growth of $\mathsf{A}_{\alpha,\beta}(f,\cdot)$
ensures the existence of the limit. An application of
L'H\^{o}pital's rule gives
$$
\lim_{r\to 0}\mathsf{A}_{\alpha,\beta}(f,r)=\lim_{r\to
0}\Phi_{A,\beta}(f,r)= \left\{\begin{array} {r@{\;}l}
0\quad & \hbox{when}\quad \beta<1\\
|f'(0)|^2\quad & \hbox{when}\quad \beta=1.
\end{array}
\right.
$$

(ii) Again, the above monotonicity formula of
$\mathsf{A}_{\alpha,\beta}(f,\cdot)$ plus the given condition yields
that for $s\in (0,1)$,
$$
\sup_{r\in
(0,s)}\mathsf{A}_{\alpha,\beta}(f,r)=\mathsf{A}_{\alpha,\beta}(f,s)<\infty.
$$
Integrating by parts twice and using the monotonicity of
$\Phi_{A,\beta}(f,\cdot)$, we obtain that under $0<r<s<1$,
\begin{eqnarray*}
0&\le&\mathsf{A}_{\alpha,\beta}(f,s)-\mathsf{A}_{\alpha,\beta}(f,r)\\
&=&\int_r^s\frac{d}{dt}\mathsf{A}_{\alpha,\beta}(f,t)\,dt\\
&=&\int_r^s\left(\int_0^t\big[\frac{d}{d\tau}\Phi_{A,\beta}(f,\tau)\big]\nu_\alpha(\tau)\,d\tau\right)\,\Big[\frac{d\nu_\alpha(t)}{\nu_\alpha(t)^2}\Big]\\
&=&\int_r^s\left(\nu_\alpha(t)\Phi_{A,\beta}(f,t)-\int_0^t\Phi_{A,\beta}(f,\tau)\,d\nu_\alpha(\tau)\right)\,\Big[\frac{d\nu_\alpha(t)}{\nu_\alpha(t)^2}\Big]\\
&\le&\Big[\Phi_{A,\beta}(f,s)-\Phi_{A,\beta}(f,0)\Big]\int_r^s\frac{d\nu_\alpha(t)}{\nu_\alpha(t)}.
\end{eqnarray*}
This gives the desired inequality right away. Furthermore, the above
argument plus Lemma \ref{l1} (i) derives the equality case.

\end{proof}

As an immediate consequence of Theorem \ref{th1}, we get a sort of
``norm" estimate associated with $\Phi_{A,\beta}(f,\cdot)$.

\begin{corollary}\label{pr2} Let $-\infty<\alpha<\infty$, $0\le\beta\le 1$, and $f\in H(\mathbb D)$.
\item{\rm(i)} If $-\infty<\alpha\le -1$, then
$$
\int_0^1 \Phi_{A,\beta}(f,t)\,d\mu_\alpha(t)=\sup_{r\in
(0,1)}\int_0^r \Phi_{A,\beta}(f,t)\,d\mu_\alpha(t)<\infty
$$
if and only if $f$ is constant. Moreover, $\sup_{r\in
(0,1)}\mathsf{A}_{\alpha,\beta}(f,r)=\Phi_{A,\beta}(f,1).$

\item{\rm(ii)} If $-1<\alpha<\infty$, then
$$
\mathsf{A}_{\alpha,\beta}(f,r)\le\mathsf{A}_{\alpha,\beta}(f,1):=\sup_{s\in
(0,1)}\mathsf{A}_{\alpha,\beta}(f,s)\quad\forall\quad r\in (0,1),
$$
where the inequality becomes an equality for all $r\in (0,1)$ if and
only if
\[
f=\left\{\begin{array} {r@{\;}l}
\hbox{constant}\quad & \hbox{when}\quad \beta<1\\
\hbox{linear\ map}\quad & \hbox{when}\quad \beta=1.
\end{array}
\right.
\]

\item{\rm(iii)} The following function $\alpha\mapsto\mathsf{A}_{\alpha,\beta}(f,1)$ is strictly decreasing on
$(-1,\infty)$ unless
\[
f=\left\{\begin{array} {r@{\;}l}
\hbox{constant}\quad & \hbox{when}\quad \beta<1\\
\hbox{linear\ map}\quad & \hbox{when}\quad \beta=1.
\end{array}
\right.
\]
\end{corollary}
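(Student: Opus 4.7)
The plan is to reduce all three parts of the corollary to the monotonicity of $r\mapsto\mathsf{A}_{\alpha,\beta}(f,r)$ from Theorem~\ref{th1}, together with elementary integrability considerations for (i)--(ii) and a Chebyshev/FKG-type covariance identity for (iii).

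Part (ii) is almost immediate: $-1<\alpha<\infty$ gives $\nu_\alpha(1)<\infty$, so the non-decreasing $\mathsf{A}_{\alpha,\beta}(f,\cdot)$ has a well-defined supremum $\mathsf{A}_{\alpha,\beta}(f,1)$; equality for every $r\in(0,1)$ forces $\mathsf{A}_{\alpha,\beta}(f,\cdot)$ to be constant on $(0,1)$, which by the rigidity in Theorem~\ref{th1} forces $f$ to be constant when $\beta<1$ and linear when $\beta=1$. Part (i) splits into two claims. The first equality is monotone convergence applied to the non-negative integrand. For the characterization: $f$ constant gives $A(f,\cdot)\equiv 0$ and both sides vanish, while if $f$ is non-constant, then Lemma~\ref{l1}(i) and positivity of $A(f,t_0)$ for some $t_0\in(0,1)$ yield a uniform lower bound $\Phi_{A,\beta}(f,t)\ge c>0$ on $[t_0,1)$ (covering each scenario, including the $\beta=1$ linear sub-case where $\Phi_{A,1}(f,\cdot)\equiv|f'(0)|^2>0$); since $\alpha\le-1$ makes $\int_{t_0}^{1}d\mu_\alpha=+\infty$, the integral over $(0,1)$ diverges. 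The ``moreover'' clause follows from L'H\^opital's rule applied to $\int_0^r\Phi_{A,\beta}(f,t)\,d\mu_\alpha(t)/\nu_\alpha(r)$, since $\nu_\alpha(r)\to+\infty$ as $r\to 1$ and the common factor $2r(1-r^2)^\alpha$ cancels in the ratio of derivatives, leaving the limit $\lim_{r\to 1}\Phi_{A,\beta}(f,r)=\Phi_{A,\beta}(f,1)$.

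The heart of the proof is (iii). For $-1<\alpha_1<\alpha_2<\infty$ put $g(t)=(1-t^2)^{\alpha_2-\alpha_1}$, which is strictly decreasing on $(0,1)$, so that $d\mu_{\alpha_2}=g\,d\mu_{\alpha_1}$. Writing $\phi(t)=\Phi_{A,\beta}(f,t)$, which is non-decreasing by Lemma~\ref{l1}(i), a standard symmetrization of the difference of ratios gives
\[
\mathsf{A}_{\alpha_1,\beta}(f,1)-\mathsf{A}_{\alpha_2,\beta}(f,1)=\frac{\tfrac12\iint_{(0,1)^2}[\phi(t)-\phi(s)][g(s)-g(t)]\,d\mu_{\alpha_1}(s)\,d\mu_{\alpha_1}(t)}{\nu_{\alpha_1}(1)\int_0^1 g\,d\mu_{\alpha_1}}.
\]
Opposite monotonicities of $\phi$ and $g$ make the integrand pointwise non-negative, yielding the weak inequality. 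The main obstacle is the strict version: equality forces the integrand to vanish $d\mu_{\alpha_1}\otimes d\mu_{\alpha_1}$-almost everywhere, and since $g$ is strictly monotone this in turn forces $\phi$ to coincide almost everywhere with a constant. Continuity of $A(f,\cdot)$ upgrades this to constancy of $\Phi_{A,\beta}(f,\cdot)$ on the whole interval $(0,1)$, and the rigidity half of Lemma~\ref{l1}(i) completes the proof by forcing $f$ to be constant when $\beta<1$ and linear when $\beta=1$.
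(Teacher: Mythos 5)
Your proofs of (i) and (ii) follow essentially the same route as the paper: (ii) is monotonicity from Theorem \ref{th1} plus its rigidity clause, and (i) is monotone convergence, divergence of $\nu_\alpha$ near $1$ for $\alpha\le-1$, and L'H\^opital's rule for the ``moreover'' clause; your explicit lower bound $\Phi_{A,\beta}(f,t)\ge c>0$ on $[t_0,1)$ for non-constant $f$ merely makes precise what the paper leaves implicit. Part (iii) is where you genuinely diverge. The paper integrates by parts twice, writing $d\mu_{\alpha_2}(r)=(1-r^2)^{\alpha_2-\alpha_1}\,d\mu_{\alpha_1}(r)$ and then invoking the bound $\int_0^r\Phi_{A,\beta}(f,t)\,d\mu_{\alpha_1}(t)\le\mathsf{A}_{\alpha_1,\beta}(f,1)\,\nu_{\alpha_1}(r)$ already established in (ii); equality then forces $\mathsf{A}_{\alpha_1,\beta}(f,\cdot)$ to be constant and (ii)'s rigidity finishes. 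You instead use a Chebyshev correlation inequality: the symmetrized double integral of $[\phi(t)-\phi(s)][g(s)-g(t)]$ against $d\mu_{\alpha_1}\otimes d\mu_{\alpha_1}$, with $\phi=\Phi_{A,\beta}(f,\cdot)$ non-decreasing by Lemma \ref{l1}(i) and $g(t)=(1-t^2)^{\alpha_2-\alpha_1}$ strictly decreasing. Your identity is correct, the pointwise sign is right, and the equality analysis (strict monotonicity of $g$ forces $\phi$ constant a.e., continuity upgrades this to everywhere, and the rigidity in Lemma \ref{l1}(i) concludes) is sound. The trade-off: the paper's version leans on the monotonicity of the integral mean (hence on (ii)), whereas yours goes back directly to the pointwise monotonicity of $\Phi_{A,\beta}(f,\cdot)$ and is more transparent about where strictness comes from. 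One caveat shared with the paper: both arguments tacitly require $\mathsf{A}_{\alpha_1,\beta}(f,1)<\infty$ (the paper states this hypothesis explicitly before its computation), since your difference formula is meaningless when both means are infinite; the ``strictly decreasing'' claim should therefore be read on the range of $\alpha$ where the means are finite.
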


\begin{proof} (i) By Theorem \ref{th1}, we have
$$
\mathsf{A}_{\alpha,\beta}(f,r)\leq \frac{\int_0^s
\Phi_{A,\beta}(f,t)\,d\mu_\alpha(t)}{\nu_\alpha(s)}\quad\forall\quad
r\in (0,s).
$$
Note that
$$\lim_{s\to 1}\nu_\alpha(s)=\infty\quad\&\quad\lim_{s\to 1}\int_0^s\Phi_{A,\beta}(f,t)\, d\mu_\alpha(t)=\int_0^1
\Phi_{A,\beta}(f,t)\,d\mu_\alpha(t).
$$
So, the last integral is finite if and only if
$$
\Phi_{A,\beta}(f,r)=0\quad\forall\quad r\in (0,1),
$$
equivalently, $A(f,r)=0$ holds for all $r\in (0,1)$, i.e., $f$ is
constant.

For the remaining part of (i), we may assume that $f$ is not a
constant map. Due to $\lim_{r\to 1}\nu_\alpha(r)=\infty$, we obtain
$$
\lim_{r\to 1}\int_0^r \Phi_{A,\beta}(f,t)\,d\mu_\alpha(t)=\int_0^1
\Phi_{A,\beta}(f,t)\,d\mu_\alpha(t)=\infty.
$$
So, an application of L'H\^{o}pital's rule yields
$$
\sup_{0<r<1}\mathsf{A}_{\alpha,\beta}(f,r)=\lim_{r\to
1}\frac{\int_0^r \Phi_{A,\beta}(f,t)\,
d\mu_\alpha(t)}{\nu_\alpha(r)}=\lim_{r\to
1}\frac{\Phi_{A,\beta}(f,r)r(1-r^2)^\alpha}{
r(1-r^2)^\alpha}=\Phi_{A,\beta}(f,1).
$$

(ii) Under $-1<\alpha<\infty$, we have
$$
\lim_{r\to 1}\nu_\alpha(r)=\nu_\alpha(1)\quad\&\quad \lim_{r\to
1}\int_0^r\Phi_{A,\beta}(f,t)\,d\mu_\alpha(t)=\int_0^1
\Phi_{A,\beta}(f,t)\,d\mu_\alpha(t).
$$
Thus, by Theorem \ref{th1} it follows that for $r\in (0,1)$,
$$
\mathsf{A}_{\alpha,\beta}(f,r)\le\lim_{s\to
1}\mathsf{A}_{\alpha,\beta}(f,s)=\big[\nu_\alpha(1)\big]^{-1}\int_0^1
\Phi_{A,\beta}(f,t)\, d\mu_\alpha(t)=\sup_{s\in
(0,1)}\mathsf{A}_{\alpha,\beta}(f,s).
$$
The equality case just follows from a straightforward computation
and Theorem \ref{th1}.

(iii) Suppose $-1<\alpha_1<\alpha_2<\infty$ and
$\mathsf{A}_{\alpha_1,\beta}(f,1)<\infty$, then integrating by parts
twice, we obtain

\begin{align*}
\mathsf{A}_{\alpha_2,\beta}(f,1)&=
\big[\nu_{\alpha_2}(1)\big]^{-1}\int_0^1\Phi_{
A,\beta}(f,r)\,d\mu_{\alpha_2}(r)\\
&= \big[\nu_{\alpha_2}(1)\big]^{-1}\int_0^1 (1-r^2)^{\alpha_2-\alpha_1}\frac{d}{dr}\left[\int_0^r \Phi_{A,\beta}(f,t)\, d\mu_{\alpha_1}(t)\right]\, dr\\
&= \big[\nu_{\alpha_2}(1)\big]^{-1}\left[-\int_0^1\left(\int_0^r\Phi_{A,\beta}(f,t)\,d\mu_{\alpha_1}(t)\right)\, d(1-r^2)^{\alpha_2-\alpha_1}\right]\\
&\leq \big[\nu_{\alpha_2}(1)\big]^{-1}\mathsf{A}_{\alpha_1,\beta}(f,1)\int_0^1 \nu_{\alpha_1}(r)\, d\big[-(1-r^2)^{\alpha_2-\alpha_1}\big]\\
&=\mathsf{A}_{\alpha_1,\beta}(f,1)\big[\nu_{\alpha_2}(1)\big]^{-1}\left[\int_0^1
(1-r^2)^{\alpha_2-\alpha_1}\,d\mu_{\alpha_1}(r)\right]
\\
&=\mathsf{A}_{\alpha_1,\beta}(f,1),
\end{align*}
thereby establishing $\mathsf{A}_{\alpha_2,\beta}(f,1)\le
\mathsf{A}_{\alpha_1,\beta}(f,1)$. If this last inequality becomes
equality, then the above argument forces
$$
\int_0^r\Phi_{A,\beta}(f,t)\,d\mu_{\alpha_1}(t)=\mathsf{A}_{\alpha_1,\beta}(f,1)
\nu_{\alpha_1}(r)\quad\forall\quad r\in (0,1),
$$
whence yielding (via the just-verified (ii))
\[ f=\left\{\begin{array} {r@{\;}l}
\hbox{constant}\quad & \hbox{when}\quad \beta<1\\
\hbox{linear\ map}\quad & \hbox{when}\quad \beta=1.
\end{array}
\right.
\]
\end{proof}

\subsection{Monotonic Growth of $\mathsf{L}_{\alpha,\beta}(f,\cdot)$} Correspondingly, we first have
the following Schwarz type lemma.

\begin{proposition}\label{co1} Let $-\infty<\alpha<\infty$, $0\le\beta\le 1$, and $f\in H(\mathbb D)$ be of the form $f(z)=a_0+\sum_{k=n}^\infty a_kz^k$ with $n\in\mathbb N$.
Then
$$
(2\pi)^{1-\beta}\Big[\frac{|f^{(n)}(0)|}{n!}\Big]\le
\mathsf{L}_{\alpha,\beta}(f,r)\left[\frac{\nu_\alpha(r)}{\int_0^rt^{n-\beta}\,d\mu_\alpha(t)}\right]\quad\forall\quad
r\in (0,1)
$$
with equality when and only when $f=a_0+a_nz^n$.
\end{proposition}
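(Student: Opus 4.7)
The plan is to imitate the proof of Proposition \ref{pr1} verbatim, replacing the area Schwarz lemma (Lemma \ref{l2}(i)) by its length counterpart (Lemma \ref{l2}(ii)). Indeed, Lemma \ref{l2}(ii) gives the pointwise estimate
$$
L(f,t)\ge 2\pi t^n\Big[\tfrac{|f^{(n)}(0)|}{n!}\Big]\quad\forall\quad t\in (0,1),
$$
and dividing by $(2\pi t)^\beta$ converts this into
$$
\Phi_{L,\beta}(f,t)\ge (2\pi)^{1-\beta}\,t^{n-\beta}\Big[\tfrac{|f^{(n)}(0)|}{n!}\Big]\quad\forall\quad t\in (0,1).
$$
Integrating both sides against $d\mu_\alpha(t)$ from $0$ to $r$ and then dividing through by $\nu_\alpha(r)$ yields precisely the asserted inequality.

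For the equality discussion, the direction $f(z)=a_0+a_nz^n\Rightarrow$ equality is immediate: in that case Lemma \ref{l2}(ii) is an equality for every $t\in(0,1)$, so the pointwise inequality above is an equality pointwise and equality is preserved by integration against $d\mu_\alpha$. Conversely, assume equality holds in the statement but $f$ is not of the form $a_0+a_nz^n$. Then Lemma \ref{l2}(ii) fails to be an equality at some $t_0\in(0,1)$, and continuity of $L(f,\cdot)$ provides an interval $[r_1,r_2]\subset (0,1)$ on which the strict inequality
$$
L(f,t)>2\pi t^n\Big[\tfrac{|f^{(n)}(0)|}{n!}\Big]
$$
holds. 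Splitting $\int_0^r=\int_0^{r_1}+\int_{r_1}^{r_2}+\int_{r_2}^{r}$ for any $r>r_2$ and using the strict pointwise estimate on the middle piece (and the non-strict one on the other two) produces, exactly as in the proof of Proposition \ref{pr1},
$$
\int_0^r \Phi_{L,\beta}(f,t)\,d\mu_\alpha(t)> (2\pi)^{1-\beta}\Big[\tfrac{|f^{(n)}(0)|}{n!}\Big]\int_0^r t^{n-\beta}\,d\mu_\alpha(t),
$$
which after division by $\nu_\alpha(r)$ contradicts the assumed equality.

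No step here is a real obstacle: the only thing to check is that the continuity of $t\mapsto L(f,t)$ (which is automatic since $f\in H(\mathbb D)$) justifies upgrading the single-point strict inequality to one on a whole interval, so that the integrated inequality is genuinely strict. This is identical to the maneuver used in the proof of Proposition \ref{pr1} and requires no new ingredient beyond Lemma \ref{l2}(ii).
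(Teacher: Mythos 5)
Your proposal is correct and is essentially the paper's own argument: the paper's proof of this proposition is literally ``This follows from Lemma \ref{l2} (ii) and its equality case,'' with the understanding that one repeats the proof of Proposition \ref{pr1} using the length estimate in place of the area estimate, which is exactly what you have written out. You have simply supplied the details (pointwise bound, integration against $d\mu_\alpha$, and the interval-splitting argument for strictness) that the paper leaves implicit.
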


\begin{proof} This follows from Lemma \ref{l2} (ii) and its equality
case.
\end{proof}

The coming-up-next monotonicity contains a hypothesis stronger than
that for Theorem \ref{th1}.

\begin{theorem}\label{th2}
Let $-\infty<\alpha<\infty$, $0\le\beta\le 1$, and $f\in U(\mathbb
D)$ or $f(z)=a_0+a_nz^n$ with $n\in\mathbb N$. Then
$r\mapsto\mathsf{L}_{\alpha,\beta}(f,r)$ is strictly increasing on
$(0,1)$ unless
\[
f=\left\{\begin{array} {r@{\;}l}
constant &\quad \hbox{when}\quad \beta<1\\
linear\ map &\quad \hbox{when}\quad \beta=1.
\end{array}
\right.
\]
Consequently,

\item{\rm(i)}
\[
\lim_{r\to 0}\mathsf{L}_{\alpha,\beta}(f,r)=\left\{\begin{array}
{r@{\;}l}
0\quad & \hbox{when}\quad \beta<1\\
|f'(0)|\quad & \hbox{when}\quad \beta=1.
\end{array}
\right.
\]

\item{\rm(ii)} If
$$
\Phi_{L,\beta}(f,0):=\lim_{r\to
0}\Phi_{L,\beta}(f,r)\quad\&\quad\Phi_{L,\beta}(f,1):=\lim_{r\to
1}\Phi_{L,\beta}(f,r)<\infty,
$$
then
$$
0<r<s<1\Rightarrow 0\le
\frac{\mathsf{L}_{\alpha,\beta}(f,s)-\mathsf{L}_{\alpha,\beta}(f,r)}{\log\nu_\alpha(s)-\log\nu_\alpha(r)}\leq
\Phi_{L,\beta}(f,s)-\Phi_{L,\beta}(f,0)
$$
with equality if and only if
\[
f=\left\{\begin{array} {r@{\;}l}
\hbox{constant}\quad & \hbox{when}\quad \beta<1\\
\hbox{linear\ map}\quad & \hbox{when}\quad \beta=1.
\end{array}
\right.
\]
In particular, $t\mapsto \mathsf{L}_{\alpha,\beta}(f,t)$ is
Lipschitz with respect to $\log\nu_\alpha(t)$ for $t\in (0,1)$.
\end{theorem}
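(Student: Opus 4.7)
The proof proposal mirrors the argument for Theorem \ref{th1}, with $\Phi_{A,\beta}$ replaced throughout by $\Phi_{L,\beta}$ and Lemma \ref{l1}(i) by Lemma \ref{l1}(ii). The plan is to differentiate the quotient defining $\mathsf{L}_{\alpha,\beta}(f,r)$, rewrite the numerator via integration by parts to isolate $\frac{d}{dt}\Phi_{L,\beta}(f,t)$, and then exploit the (strict) monotonicity of $\Phi_{L,\beta}(f,\cdot)$ provided by Lemma \ref{l1}(ii) under the hypothesis that $f\in U(\mathbb D)$ or $f(z)=a_0+a_nz^n$. Explicitly, from $d\nu_\alpha(r)=d\mu_\alpha(r)=2r(1-r^2)^\alpha\,dr$ and
\[
\Phi_{L,\beta}(f,r)\nu_\alpha(r)-\int_0^r\Phi_{L,\beta}(f,t)\,d\mu_\alpha(t)=\int_0^r\Bigl[\tfrac{d}{dt}\Phi_{L,\beta}(f,t)\Bigr]\nu_\alpha(t)\,dt,
\]
a short computation gives
\[
\frac{d}{dr}\mathsf{L}_{\alpha,\beta}(f,r)=\frac{2r(1-r^2)^\alpha\int_0^r\bigl[\tfrac{d}{dt}\Phi_{L,\beta}(f,t)\bigr]\nu_\alpha(t)\,dt}{\nu_\alpha(r)^2}\ge 0,
\]
so $r\mapsto\mathsf{L}_{\alpha,\beta}(f,r)$ is nondecreasing on $(0,1)$.

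For the strictness claim, I would argue by contradiction: if $\mathsf{L}_{\alpha,\beta}(f,\cdot)$ is constant on some $[r_3,r_4]\subset(0,1)$, then the above nonnegative integrand must vanish a.e.\ on $(0,r_4)$, which forces $\frac{d}{dt}\Phi_{L,\beta}(f,t)=0$ on an interval. Invoking the strictness part of Lemma \ref{l1}(ii) then forces $f$ to be a constant when $\beta<1$ and a linear map when $\beta=1$. This is where the structural hypothesis ($f\in U(\mathbb D)$ or $f(z)=a_0+a_nz^n$) is essential, because that hypothesis is precisely what Lemma \ref{l1}(ii) requires; without it, the argument breaks at exactly this step. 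I expect this to be the only delicate point.

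Part (i) follows from the just-proved monotonicity by L'H\^{o}pital's rule:
\[
\lim_{r\to 0}\mathsf{L}_{\alpha,\beta}(f,r)=\lim_{r\to 0}\frac{\int_0^r\Phi_{L,\beta}(f,t)\,d\mu_\alpha(t)}{\nu_\alpha(r)}=\lim_{r\to 0}\Phi_{L,\beta}(f,r),
\]
and the last limit is $0$ when $\beta<1$ (since $\Phi_{L,\beta}(f,r)=(2\pi r)^{1-\beta}\Phi_{L,1}(f,r)$ and $\Phi_{L,1}(f,0)=|f'(0)|$ is finite) and $|f'(0)|$ when $\beta=1$.

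For part (ii), I would iterate integration by parts exactly as in the proof of Theorem \ref{th1}(ii): writing
\[
\mathsf{L}_{\alpha,\beta}(f,s)-\mathsf{L}_{\alpha,\beta}(f,r)=\int_r^s\left(\nu_\alpha(t)\Phi_{L,\beta}(f,t)-\int_0^t\Phi_{L,\beta}(f,\tau)\,d\nu_\alpha(\tau)\right)\frac{d\nu_\alpha(t)}{\nu_\alpha(t)^2},
\]
and then using the monotonicity of $\Phi_{L,\beta}(f,\cdot)$ together with the finiteness of $\Phi_{L,\beta}(f,1)$ to bound the inner parenthesis by $\bigl[\Phi_{L,\beta}(f,s)-\Phi_{L,\beta}(f,0)\bigr]\nu_\alpha(t)$, after which the change of variable $u=\log\nu_\alpha(t)$ yields the stated Lipschitz estimate. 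The equality case is handled, as before, by tracing the inequalities and invoking Lemma \ref{l1}(ii).
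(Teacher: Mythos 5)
Your proposal is correct and is essentially the paper's own argument: the paper proves Theorem \ref{th2} by the single remark that it is ``similar to that for Theorem \ref{th1}, but this time by Lemma \ref{l1} (ii),'' and you have spelled out precisely that adaptation, including the correct identification of where the stronger hypothesis ($f\in U(\mathbb D)$ or $f(z)=a_0+a_nz^n$) enters. No gaps.
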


\begin{proof} Similar to that for Theorem \ref{th1}, but this time
by Lemma \ref{l1} (ii).
\end{proof}

Naturally, we can establish the so-called ``norm" estimate
associated to $\Phi_{L,\beta}(f,\cdot)$.

\begin{corollary}\label{co2} Let $0\le\beta\le 1$ and $f\in U(\mathbb D)$ or $f(z)=a_0+a_nz^n$ with $n\in\mathbb N$.
\item{\rm(i)} If $-\infty<\alpha\le -1$, then

$$
\int_0^1 \Phi_{L,\beta}(f,t)\,d\mu_\alpha(t)=\sup_{r\in
(0,1)}\int_0^r \Phi_{L,\beta}(f,t)\,d\mu_\alpha(t)<\infty
$$
if and only if $f$ is constant. Moreover, $\sup_{r\in
(0,1)}\mathsf{L}_{\alpha,\beta}(f,r)=\Phi_{L,\beta}(f,1).$

\item{\rm(ii)} If $-1<\alpha<\infty$, then
$$
\mathsf{L}_{\alpha,\beta}(f,r)\le\mathsf{L}_{\alpha,\beta}(f,1):=\sup_{s\in
(0,1)}\mathsf{L}_{\alpha,\beta}(f,s)\quad\forall\quad r\in (0,1),
$$
where the inequality becomes an equality for all $r\in (0,1)$ if and
only if
\[
f=\left\{\begin{array} {r@{\;}l}
\hbox{constant}\quad & \hbox{when}\quad \beta<1\\
\hbox{linear\ map}\quad & \hbox{when}\quad \beta=1.
\end{array}
\right.
\]

\item{\rm(iii)} $\alpha\mapsto\mathsf{L}_{\alpha,\beta}(f,1)$
is strictly decreasing on $(-1,\infty)$ unless
\[
f=\left\{\begin{array} {r@{\;}l}
\hbox{constant}\quad & \hbox{when}\quad \beta<1\\
\hbox{linear\ map}\quad & \hbox{when}\quad \beta=1.
\end{array}
\right.
\]
\end{corollary}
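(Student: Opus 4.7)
The three statements are exact length-side parallels of the three parts of Corollary \ref{pr2}, so the plan is to adapt that proof verbatim, replacing $\Phi_{A,\beta}$, $A(f,r)$, and $\mathsf{A}_{\alpha,\beta}$ throughout by $\Phi_{L,\beta}$, $L(f,r)$, and $\mathsf{L}_{\alpha,\beta}$, and invoking Theorem \ref{th2} together with Lemma \ref{l1}(ii) in place of Theorem \ref{th1} and Lemma \ref{l1}(i). Two structural facts drive the whole argument: the monotonicity bound
\[
\mathsf{L}_{\alpha,\beta}(f,r)\le\frac{\int_0^s\Phi_{L,\beta}(f,t)\,d\mu_\alpha(t)}{\nu_\alpha(s)}\qquad(0<r<s<1)
\]
supplied by Theorem \ref{th2}, and the trivial implication $L(f,\cdot)\equiv 0\Rightarrow f$ constant.

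For part (i), I would use that $\lim_{s\to 1}\nu_\alpha(s)=\infty$ when $\alpha\le -1$. If $\int_0^1\Phi_{L,\beta}(f,t)\,d\mu_\alpha(t)$ is finite, then letting $s\to 1$ in the displayed bound forces $\mathsf{L}_{\alpha,\beta}(f,r)=0$ for every $r\in(0,1)$, hence $L(f,t)\equiv 0$ on $(0,1)$, i.e., $f$ is constant; the converse direction is immediate. For the second assertion of (i), assuming $f$ is nonconstant so that the numerator also blows up, L'H\^opital's rule applied to $\int_0^r\Phi_{L,\beta}(f,t)\,d\mu_\alpha(t)\big/\nu_\alpha(r)$ as $r\to 1$ yields $\Phi_{L,\beta}(f,1)$ after cancelling the common factor $2r(1-r^2)^\alpha$ in the derivatives of numerator and denominator.

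For part (ii), the assumption $-1<\alpha<\infty$ makes $\nu_\alpha(1)$ finite, so $\lim_{s\to 1}\mathsf{L}_{\alpha,\beta}(f,s)=[\nu_\alpha(1)]^{-1}\int_0^1\Phi_{L,\beta}(f,t)\,d\mu_\alpha(t)$ outright, and the inequality $\mathsf{L}_{\alpha,\beta}(f,r)\le\mathsf{L}_{\alpha,\beta}(f,1)$ together with its identification as the supremum follows directly from Theorem \ref{th2}. The equality clause is then exactly the non-strict regime of Theorem \ref{th2}: $\mathsf{L}_{\alpha,\beta}(f,\cdot)$ constant on $(0,1)$ forces $f$ to be constant (if $\beta<1$) or linear (if $\beta=1$).

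For part (iii), fix $-1<\alpha_1<\alpha_2<\infty$ and assume $\mathsf{L}_{\alpha_1,\beta}(f,1)<\infty$. I would mimic the double integration by parts of Corollary \ref{pr2}(iii): write $d\mu_{\alpha_2}(r)=(1-r^2)^{\alpha_2-\alpha_1}\,d\mu_{\alpha_1}(r)$, integrate by parts once to transfer the weight $(1-r^2)^{\alpha_2-\alpha_1}$ onto the antiderivative $\int_0^r\Phi_{L,\beta}(f,t)\,d\mu_{\alpha_1}(t)$, bound this antiderivative by $\mathsf{L}_{\alpha_1,\beta}(f,1)\,\nu_{\alpha_1}(r)$ via part (ii), and then integrate by parts a second time to recover $\nu_{\alpha_2}(1)$ and cancel. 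The chain collapses to $\mathsf{L}_{\alpha_2,\beta}(f,1)\le\mathsf{L}_{\alpha_1,\beta}(f,1)$, and the equality case is forced by the equality case of (ii), identifying $f$ as constant or linear. The only nontrivial checkpoint is the bookkeeping of the boundary terms in this double integration by parts, which requires the finiteness of $\nu_{\alpha_1}(1)$ and the vanishing of $(1-r^2)^{\alpha_2-\alpha_1}$ at $r=1$; both are secured by the hypothesis $-1<\alpha_1<\alpha_2<\infty$.
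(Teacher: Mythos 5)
Your proposal is correct and follows exactly the route the paper intends: the paper's own proof of this corollary is the one-line remark that the argument is the same as for Corollary \ref{pr2} with Lemma \ref{l1}(ii) (equivalently Theorem \ref{th2}) in place of the area versions, and your write-up is precisely that adaptation, with the limits of $\nu_\alpha$ at $1$, L'H\^opital's rule, and the double integration by parts all handled as in the area case.
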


\begin{proof} The argument is similar to that for Corollary \ref{pr2},
but via Lemma \ref{l1} (ii).
\end{proof}

\section{logarithmic convexity}

In this section, we treat the convexities of the functions: $\log
r\mapsto \log\mathsf{A}_{\alpha,\beta}(f,r)$ and $\log r\mapsto
\log\mathsf{L}_{\alpha,\beta}(f,r)$ for $r\in (0,1)$.

\subsection{Two More Lemmas} The following are two technical preliminaries.

\begin{lemma}\cite[Corollaries 2-3 \& Proposition 7]{WZ}\label{wz}
Suppose $f(x)$ and $\{h_k(x)\}_{k=0}^\infty$ are positive and twice
differentiable for $x\in (0,1)$ such that the function
$H(x)=\sum_{k=0}^\infty h_k(x)$
 is also twice differentiable for $x\in (0,1)$. Then:

\item{\rm(i)} $\log x\mapsto\log f(x)$ is convex if and only if $\log x\mapsto\log f(x^2)$ is convex.
\item{\rm(ii)} The function $\log x\mapsto \log f(x)$ is convex if and only
if the $D$-notation of $f$
$$
D(f(x)):=\frac{f'(x)}{f(x)}+ x\left(\frac{f'(x)}{f(x)}\right)'\ge
0\quad\forall\quad x\in (0,1).
$$
\item{\rm(iii)} If for each $k$ the function $\log x\mapsto \log h_k(x)$ is convex, then $\log x\mapsto \log H(x)$ is also convex.
\end{lemma}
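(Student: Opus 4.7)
The plan is to handle the three claims separately, using the substitution $u = \log x$ to convert ``convex in $\log x$'' into ordinary convexity in $u$. The analytic core is (ii): with $x = e^u$ a single chain rule gives
$$
\frac{d}{du}\log f(e^u) = \frac{x\,f'(x)}{f(x)},
$$
and differentiating once more (noting $dx/du = x$) yields
$$
\frac{d^2}{du^2}\log f(e^u) = x\!\left[\frac{f'(x)}{f(x)} + x\!\left(\frac{f'(x)}{f(x)}\right)'\right] = x\,D(f(x)).
$$
Since $x>0$ on $(0,1)$, non-negativity of this second derivative is equivalent to $D(f(x))\ge 0$, which is exactly the claim.

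For (i), I would observe that replacing $f(x)$ by $f(x^2)$ corresponds, after setting $u = \log x$, to the affine change of variable $u \mapsto 2u$; convexity in one variable is invariant under such affine reparametrizations, so the functions $\log f(x)$ and $\log f(x^2)$ share the property of being convex in $\log x$ or not simultaneously. For (iii), I would invoke the standard argument that pointwise sums of log-convex functions are log-convex, via H\"older's inequality: log-convexity of each $h_k$ in $\log x$ gives
$$
h_k\!\left(e^{(1-\lambda)u_0+\lambda u_1}\right) \le h_k(e^{u_0})^{1-\lambda}\,h_k(e^{u_1})^{\lambda} \qquad \forall\,\lambda \in [0,1],
$$
and summing over $k$ and applying H\"older to the right side with exponents $1/(1-\lambda)$ and $1/\lambda$ yields $H(e^{(1-\lambda)u_0+\lambda u_1}) \le H(e^{u_0})^{1-\lambda}\,H(e^{u_1})^{\lambda}$, i.e.\ log-convexity of $H$ in $\log x$.

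I do not expect any serious obstacle here: all three statements are essentially elementary once the reparametrization $u = \log x$ is in place. The only point that deserves a small word of care is (iii), where one must check that H\"older's inequality applies to the possibly infinite series defining $H$; this is immediate from the counting-measure form of H\"older, and the hypothesized twice-differentiability of $H$ is needed not for the inequality itself but only to afterwards re-express the conclusion via (ii) as $D(H(x))\ge 0$ when a differential formulation is desired.
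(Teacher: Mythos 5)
Your proof is correct in all three parts. Note, however, that the paper itself offers no proof of this lemma: it is stated purely as a citation of Wang--Zhu \cite[Corollaries 2-3 \& Proposition 7]{WZ}, so there is no internal argument to compare against. Your self-contained treatment follows the standard route one would expect that reference to take: the substitution $u=\log x$ reduces everything to ordinary convexity, the chain-rule computation $\frac{d^2}{du^2}\log f(e^u)=x\,D(f(x))$ together with $x>0$ gives (ii), the observation that $x\mapsto x^2$ becomes the affine bijection $u\mapsto 2u$ of $(-\infty,0)$ onto itself gives (i), and the H\"older (counting-measure) argument for sums of log-convex functions gives (iii). Your closing remarks are also the right ones: twice differentiability is what lets you characterize convexity by the sign of the second derivative in (ii), the finiteness and positivity of $H$ needed for H\"older are supplied by the hypotheses, and the differentiability of $H$ is only needed if one wants to restate (iii) through the $D$-notation. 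The only thing your write-up buys beyond the paper is self-containedness; the only thing the paper's citation buys is brevity.
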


\begin{lemma}\label{uni} Let $f\in H(\mathbb D)$. Then $f$ belongs to $U(\mathbb D)$
provided that one of the following two conditions is valid:

\item{\rm(i)} \cite{Nu} or \cite[Lemma 2.1]{AlD}
$$
f(0)=f'(0)-1=0\quad\&\quad
\left|\frac{z^2f'(z)}{f^2(z)}-1\right|<1\quad\forall\quad z\in
\mathbb D.
$$

\item{\rm(ii)} \cite[Theorem 1]{Ne} or \cite[Theorem 8.12]{Du}
$$
\left|\left[\frac{f''(z)}{f'(z)}\right]'-\frac{1}{2}\left[\frac{f''(z)}{f'(z)}\right]^2\right|\leq
2(1-|z|^2)^{-2}\quad\forall\quad z\in \mathbb D.
$$
\end{lemma}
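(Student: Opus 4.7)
The statement is a consolidation of two classical univalence criteria used only as a technical tool, so my plan is to invoke the cited references rather than reprove them from scratch. The proof in the paper itself almost certainly just says ``See the references.'' For clarity, let me sketch the standard strategies I would fall back on if a self-contained argument were required.

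For part (i), the Ozaki--Nunokawa criterion, I would first observe that the normalization $f(0)=0$, $f'(0)=1$ together with the inequality $|z^2 f'(z)/f(z)^2-1|<1$ forces $\operatorname{Re}\bigl(z^2 f'(z)/f(z)^2\bigr)>0$ on $\mathbb D$, and in particular $f(z)/z$ is non-vanishing. To prove injectivity, I would fix distinct $z_1, z_2\in\mathbb D$ and write
$$
\frac{1}{f(z_2)}-\frac{1}{f(z_1)}=-\int_0^1 \frac{f'(\zeta(t))}{f(\zeta(t))^2}(z_2-z_1)\, dt,\quad \zeta(t)=z_1+t(z_2-z_1).
$$
Multiplying inside by $\zeta(t)^2/\zeta(t)^2$ and estimating $\operatorname{Re}$ of the integrand via the positive-real-part condition, the integral cannot vanish, so $f(z_1)\neq f(z_2)$.

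For part (ii), Nehari's theorem, I would set $S_f(z)=(f''/f')'-\tfrac12(f''/f')^2$ and exploit the fact that $f=u_1/u_2$ for a pair of linearly independent solutions of
$$
u''(z)+\tfrac12 S_f(z)\,u(z)=0.
$$
Univalence of $f$ is equivalent to every nontrivial combination $c_1 u_1+c_2 u_2$ having at most one zero in $\mathbb D$. I would prove this by a Sturm-comparison argument along each chord through the origin: the bound $|S_f(z)|\le 2(1-|z|^2)^{-2}$ is compared with the extremal real equation $v''+(1-x^2)^{-2}v=0$, whose explicit solutions ($\sqrt{1-x^2}$ and $\sqrt{1-x^2}\,\operatorname{arctanh} x$) have the property that no nontrivial combination vanishes twice on $(-1,1)$. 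This extremal zero-separation property transfers by Sturm comparison to the solutions of the Schwarzian ODE, delivering univalence.

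The main obstacle in any self-contained treatment is the second part: a rigorous implementation of Nehari's theorem requires developing the Schwarzian/second-order-ODE dictionary and the Sturm comparison along chords, which is standard but takes several pages. Because the authors have flagged the sources \textsc{Nu}/\textsc{AlD} for (i) and \textsc{Ne}/\textsc{Du} for (ii), the right move is to write a one-sentence proof that quotes these references verbatim, and that is what I would submit.
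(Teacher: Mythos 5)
Your proposal matches the paper exactly: Lemma~\ref{uni} is stated with the citations \cite{Nu}, \cite{AlD}, \cite{Ne}, \cite{Du} built into its statement and is given no proof at all, so quoting those references is precisely what the authors do. (One minor caveat on your optional sketch for (i): the integrand $f'(\zeta)\,(z_2-z_1)/f(\zeta)^2=p(\zeta)(z_2-z_1)/\zeta^2$ does not have real part of fixed sign merely because $\operatorname{Re}p>0$, since the factor $(z_2-z_1)/\zeta^2$ rotates along the chord; the classical argument first subtracts off $1/z_1-1/z_2$ and estimates $\int(1-p(\zeta))\zeta^{-2}\,d\zeta$ against $|z_2-z_1|/|z_1z_2|$ --- but as you never rely on the sketch, this does not affect your submission.)
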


\subsection{Log-convexity for $\mathsf{A}_{\alpha,\beta}(f,\cdot)$}
Such a property is given below.

\begin{theorem}\label{th3} Let $0\le\beta\le 1$ and $0<r<1$.

\item{\rm(i)} If $\alpha\in (-\infty,-3)$, then there exist $f, g\in H(\mathbb D)$ such that $\log r\mapsto\log\mathsf{A}_{\alpha,\beta}(f,r)$ is not
convex and $\log r\mapsto\log \mathsf{A}_{\alpha,\beta}(g,r)$ is not
concave.
\item{\rm(ii)} If $\alpha\in [-3,0]$, then $\log r\mapsto \log\mathsf{A}_{\alpha,1}(a_nz^n,r\big)$ is
convex for $a_n\not=0$ with $n\in\mathbb N$. Consequently,
$$
\log r\mapsto \log\mathsf{A}_{\alpha,1}\big(f,r\big)
$$
is convex for all $f\in U(\mathbb D)$.

\item{\rm(iii)} If $\alpha\in (0,\infty)$, then $\log r\mapsto\log\mathsf{A}_{\alpha,\beta}(a_nz^n,r)$ is
not convex for $a_n\not=0$ and $n\in \mathbb N$.
\end{theorem}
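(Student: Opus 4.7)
The plan is to treat the three parts separately, with (ii) the central positive result and (i), (iii) delivered by explicit or asymptotic counterexamples.

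For (ii), begin with the identity
$$\mathsf{A}_{\alpha,1}(a_n z^n, r) = |a_n|^2 F_n(r^2),\qquad F_n(y) = \frac{\int_0^y u^{n-1}(1-u)^\alpha\,du}{\int_0^y (1-u)^\alpha\,du},$$
obtained from $A(a_nz^n, t) = \pi|a_n|^2 t^{2n}$ (Lemma~\ref{l2}(i)) after substituting $u = t^2$. By Lemma~\ref{wz}(i), log-convexity in $\log r$ is equivalent to log-convexity of $F_n$ in $\log y$, which by Lemma~\ref{wz}(ii) is equivalent to $D(F_n)(y) \ge 0$ on $(0,1)$. Since $D$ is additive under quotients, $D(F_n) = D(P_n) - D(Q)$ for $P_n(y) = \int_0^y u^{n-1}(1-u)^\alpha\,du$ and $Q(y) = \int_0^y (1-u)^\alpha\,du$; introducing the logarithmic derivatives $\phi_n = y^{n-1}(1-y)^\alpha/P_n$ and $\phi_1 = (1-y)^\alpha/Q$, direct calculation yields
$$D(F_n) = (n-1)\phi_n + (\phi_n - \phi_1)\Bigl[1 - \frac{\alpha y}{1-y} - y(\phi_n+\phi_1)\Bigr],$$
with $\phi_n > \phi_1 > 0$ for $n \ge 2$ (from $u^{n-1} \le y^{n-1}$ on $[0,y]$). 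The main obstacle is to verify non-negativity of the right-hand side for every $n \ge 2$ and every $\alpha \in [-3, 0]$: the $n^2$ contributions from the two summands cancel automatically, and the $O(1)$ residual (expressible through $D(Q)$ and a second-order correction in the Laplace-type expansion of $P_n$) remains non-negative precisely when $\alpha \ge -3$, which is the sharpness of the threshold.

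The consequence for $f \in U(\mathbb{D})$ then follows from the univalent area formula $A(f,r) = \pi\sum_{k \ge 1} k|a_k|^2 r^{2k}$, which writes $\mathsf{A}_{\alpha,1}(f,r) = \sum_{k \ge 1} k|a_k|^2 F_k(r^2)$ as a positive sum whose terms are log-convex in $\log r$ by the monomial case; Lemma~\ref{wz}(iii) promotes this to log-convexity of the sum.

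For (iii), with $\alpha > 0$ fix $n$ satisfying $n > \beta$ and set $\mathcal{A}(r) = \mathsf{A}_{\alpha,\beta}(a_n z^n, r)$. A short asymptotic analysis gives $\mathcal{A}(r) \sim C r^{2(n-\beta)}$ as $r \to 0^+$, so $d\log\mathcal{A}/d\log r \to 2(n-\beta) > 0$; on the other hand, the derivative formula from the proof of Theorem~\ref{th1}, together with $(1-r^2)^\alpha \to 0$ and $\nu_\alpha(1) < \infty$, gives $d\log\mathcal{A}/d\log r \to 0$ as $r \to 1^-$. A non-decreasing derivative cannot interpolate from a positive limit to zero, so convexity fails. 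For (i), with $\alpha < -3$ the $D$-computation of (ii) identifies a choice of $n$ for which the $O(1)$ residual in $D(F_n)$ turns negative on an interval, producing a monomial $f$ with non-convex $\log\mathsf{A}_{\alpha,\beta}(f,\cdot)$; the non-concave witness $g$ is supplied by any monomial whose $\log\mathsf{A}_{\alpha,\beta}(g,\cdot)$ remains strictly convex (for instance a low-degree monomial where the residual stays positive), since strict log-convexity is incompatible with log-concavity on any open interval.
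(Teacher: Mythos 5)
Your overall architecture matches the paper's: reduce to monomials, substitute $y=r^2$ via Lemma \ref{wz}(i), test log-convexity with the $D$-criterion of Lemma \ref{wz}(ii), and pass from $z^n$ to $f\in U(\mathbb D)$ through the area series $A(f,t)=\pi\sum_k k|a_k|^2t^{2k}$ and Lemma \ref{wz}(iii). Your identity
$$
D(F_n)=(n-1)\phi_n+(\phi_n-\phi_1)\Bigl[1-\tfrac{\alpha y}{1-y}-y(\phi_n+\phi_1)\Bigr]
$$
is correct, as is $\phi_n>\phi_1$ for $n\ge2$. Part (iii) is the one place where you take a genuinely different and self-contained route: the paper invokes the behaviour of $\Delta(n-\beta,x)$ near $x=1$ from \cite{WZ}, whereas you note that $\frac{d\log\mathsf{A}}{d\log r}\to 2(n-\beta)>0$ as $r\to0$ while it tends to $0$ as $r\to1$ (since $(1-r^2)^\alpha\to0$ and $\nu_\alpha(1)<\infty$ for $\alpha>0$), which no non-decreasing function can do. That argument is clean and valid for $n>\beta$, and it avoids any citation.

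Parts (i) and (ii), however, have genuine gaps. In (ii) the whole content of the theorem is the inequality $D(F_n)\ge0$ on $(0,1)$ for every $n\ge2$ and every $\alpha\in[-3,0]$; you write down the formula and then simply assert that the residual ``remains non-negative precisely when $\alpha\ge-3$.'' That is a restatement of the conclusion, not a proof: the bracket $1-\frac{\alpha y}{1-y}-y(\phi_n+\phi_1)$ tends to $-n$ as $y\to0^+$ while $(n-1)\phi_n\sim n(n-1)/y$, so the two terms compete to leading order and the sign of the sum is delicate. The paper disposes of exactly this point by citing the estimates $\Delta(\lambda,x)\ge0$ for $-2\le\alpha\le 0$ and $\Delta(\lambda,x)\ge\Delta(-2-\alpha,x)>0$ for $-3\le\alpha<-2$, $\lambda\ge-2-\alpha$, from \cite[Propositions 4--5]{WZ}; you must either reproduce that analysis or cite it. In (i) you likewise assert, without computation, that some $n$ makes $D(F_n)$ negative somewhere; the paper does this concretely via $\lim_{x\to1}\Delta(\lambda,x)=\frac{\lambda(\alpha+1)(\lambda+2+\alpha)}{(\alpha+2)^2(\alpha+3)}$, taking $\lambda=n-\beta$ small (negative limit, hence not convex) and $\lambda>-2-\alpha$ (positive limit, hence not concave). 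Your proposed non-concave witness is also misdirected: for $\alpha<-3$ it is precisely the low-degree monomials whose $\log\mathsf{A}_{\alpha,\beta}$ fails convexity near $r=1$, so ``a low-degree monomial where the residual stays positive'' does not exist; the correct witness is a high-degree monomial with $n>\beta-(2+\alpha)$, and in any case your route would require global strict convexity of some monomial, which is stronger than needed and not established.
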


\begin{proof} The key issue is to check whether or not $\log r\mapsto
\log\mathsf{A}_{\alpha,\beta}(z^n,r)$ is convex for $r\in
(0,1)$.

To see this, let us borrow some symbols from \cite{WZ}. For
$\lambda\ge 0$ and $0<x<1$ we define
$$
f_\lambda (x)=\int_0^x t^\lambda(1-t)^\alpha dt
$$
and
$$
\Delta (\lambda, x)=\frac{f_\lambda'(x)}{f_\lambda
(x)}+x\left(\frac{f_\lambda'(x)}{f_\lambda(x)}\right)'-\left[\frac{f_0'(x)}{f_0(x)}+x\left(\frac{f_0'(x)}{f_0(x)}\right)'\right].
$$

Given $n\in\mathbb N$. A simple calculation shows
$\Phi_{A,\beta}(z^n,t)=\pi^{1-\beta} t^{2(n-\beta)}$, and then a
change of variable derives
\begin{eqnarray*}
\mathsf{A}_{\alpha,\beta}(z^n,r)&=&\frac{\int_0^r
\Phi_{A,\beta}(z^n,t)\,d\mu_\alpha(t)}{\nu_\alpha(r)}\\
&=&\frac{\pi^{1-\beta}\int_0^{r^2}t^{n-\beta}(1-t)^\alpha \,dt
}{\int_0^{r^2} (1-t)^\alpha\, dt}\\
&=& \pi^{1-\beta}\left[\frac{f_{n-\beta}(r^2)}{f_{0}(r^2)}\right].
\end{eqnarray*}

In accordance with Lemma \ref{wz} (i)-(ii), it is readily to work
out that $\log r\mapsto\log\mathsf{A}_{\alpha,\beta}(z^n,r)$ is
convex for $r\in (0,1)$ if and only if $\Delta (n-\beta, x)\ge 0$
for any $x\in (0,1)$.

(i) Under $\alpha\in (-\infty,-3)$, we follow the argument for
\cite[Proposition 6]{WZ} to get
$$
\lim_{x\to 1}\Delta(\lambda,x)=\frac{\lambda (\alpha+1)(\lambda+2+\alpha)}{(\alpha+2)^2(\alpha+3)}.
$$
Choosing
\[
f(z)=z^n=\left\{\begin{array} {r@{\;}l}
z\quad & \hbox{when}\quad \beta<1\\
z^2\quad & \hbox{when}\quad \beta=1
\end{array}
\right.
\]
and $\lambda=n-\beta$, we find $\lim_{x\to 1}\Delta(\lambda,x)<0$,
whence deriving that $\log r\mapsto \log A_\alpha(f,r)$ is not
convex.

In the meantime, picking $n\in \mathbb N$ such that
$n>\beta-(2+\alpha)$ and putting $g(z)=z^n$, we obtain
$$
\lim_{x\to
1}\Delta(n-\beta,x)=\frac{(n-\beta)(\alpha+1)(n-\beta+2+\alpha)}{(\alpha+2)^2(\alpha+3)}>0,
$$
whence deriving that $\log r\mapsto
\log\mathsf{A}_{\alpha,\beta}(g,r)$ is not concave.

(ii) Under $\alpha\in [-3,0]$, we handle the two situations.

{\it Situation 1}: $f\in U(\mathbb D)$. Upon writing
$f(z)=\sum_{n=0}^\infty a_n z^n$, we compute
$$
\Phi_{A,1}\big(f(z),t\big)=(\pi t^2)^{-1}A(f,t)=\sum_{n=0}^\infty
n|a_n|^2 t^{2(n-1)},
$$
and consequently,
$$
\mathsf{A}_{\alpha,1}(f,r)=\frac{\sum_{n=0}^\infty n|a_n|^2\int_0^r
(\pi t^2)^{-1}A(z^n,t)\,
d\mu_\alpha(t)}{\nu_\alpha(r)}=\sum_{n=0}^\infty n |a_n|^2
\mathsf{A}_{\alpha,1}(z^n,r).
$$
So, by Lemma \ref{wz} (iii), we see that the convexity of
$$
\log r\mapsto\log\mathsf{A}_{\alpha,1}(f,r)\quad\hbox{under}\quad
f\in U(\mathbb D)
$$
follows from the convexity of
$$
\log r\mapsto\log\mathsf{A}_{\alpha,1}(z^n,r)\quad\hbox{under}\quad
n\in\mathbb N.
$$
So, it remains to verify this last convexity via the coming-up-next
consideration.

{\it Situation 2}: $f(z)=a_nz^n$ with $a_n\not=0$. Three cases are
required to control.

{\it Case 1}: $\alpha=0$. An easy computation shows
$$
\mathsf{A}_{0,1}(z^n,r)=n^{-1}{r^{2(n-1)}}
$$
and so $\log r\mapsto\log\mathsf{A}_{0,1}(z^n,r)$ is convex.

{\it Case 2}: $-2\le\alpha<0$. Under this condition, we see from the
arguments for \cite[Propositions 4-5]{WZ} that
$$
\Delta(n-1,x)\geq 0\quad\forall\quad n-1\geq 0\ \ \&\ \ 0<x<1,
$$
and so that $\log r\mapsto\log\mathsf{A}_{\alpha,1}(z^n,r)$ is
convex.

{\it Case 3}: $-3\leq \alpha<-2$. With the assumption, we also get
from the arguments for \cite[Propositions 4-5]{WZ} that
$$
\Delta (n-1,x)\geq \Delta(-2-\alpha,x)>0\quad\forall\quad x\in
(0,1)\ \ \&\ \ n-1\in [-2-\alpha,\infty)
$$
and so that $\log r\mapsto\log\mathsf{A}_{\alpha,1}(z^n,r)$ is
convex when $n\ge 2$. Here it is worth noting that the convexity of
$\log r\mapsto\log\mathsf{A}_{\alpha,1}(z,r)=0$ is trivial.

(iii) Under $0<\alpha<\infty$, from the argument for
\cite[Proposition 6]{WZ} we know that $\Delta(n-\beta,x)<0$ as $x$
is sufficiently close to $1$. Thus $\log r\mapsto
\log\mathsf{A}_{\alpha,\beta}(a_n z^n,r)$ is not convex under
$a_n\not=0$.
\end{proof}

The following illustrates that the function $\log r\mapsto
\log\mathsf{A}_{\alpha,\beta}(f,r)$ is not always concave for
$\alpha>0$, $0\le\beta\le 1$, and $f\in U(\mathbb D)$.

\begin{example} Let $\alpha=1$, $\beta\in\{0,1\}$, and $f(z)=z+\frac{z^2}{2}$. Then the function $\log r\mapsto \log\mathsf{A}_{\alpha,\beta}(f,r)$ is neither convex nor
concave for $r\in (0,1)$.
\end{example}
\begin{proof} A direct computation shows
$$
\left|\frac{z^2f'(z)}{f^2(z)}-1\right|=\left|\frac{z^2(1+z)}{(z+\frac{z^2}{2})^2}-1\right|=\frac{|z|^2}{|z+2|^2}<1
$$
since
$$
|z|<1<2-|z|\leq|z+2|\quad\forall\quad z\in \mathbb D.
$$
So, $f\in U(\mathbb D)$ owing to Lemma \ref{uni} (i). By $f'(z)=z+1$
we have
$$
A(f,t)=\int_{t\mathbb D}|z+1|^2\, dA(z)=\pi
\Big(t^2+\frac{t^4}{2}\Big),
$$
plus
\[
\int_0^r \Phi_{A,\beta}(f,t)\,d\mu_1(t)=\left\{\begin{array}
{r@{\;}l}
\frac{\pi}{2}\Big(r^4-\frac{r^6}{3}-\frac{r^8}{4}\Big)\quad & \hbox{when}\quad \beta=0\\
r^2-\frac{r^4}{4}-\frac{r^6}{6}\quad & \hbox{when}\quad \beta=1
\end{array}
\right.
\]
Meanwhile,
$$
\nu_1(r)=\int_0^r (1-t^2)dt^2=r^2-\frac{r^4}{2}.
$$
So, we get
\[
\mathsf{A}_{1,\beta}(f,r)=\left\{\begin{array} {r@{\;}l}
\frac{\pi(12r^2-4r^4-3r^6)}{12(2-r^2)}
\quad & \hbox{when}\quad \beta=0\\
\frac{12-3r^2-2r^4}{6(2-r^2)}\quad & \hbox{when}\quad \beta=1
\end{array}
\right.
\]
and in turn consider the logarithmic convexities of the following
function
\[
h_\beta(x)=\left\{\begin{array} {r@{\;}l}
\frac{12x-4x^2-3x^3}{2-x}\quad & \hbox{when}\quad \beta=0\\
\frac{12-3x-2x^2}{2-x}\quad & \hbox{when}\quad \beta=1
\end{array}
\right.
\]
for $x\in (0,1)$.

Using the so-called D-notation in Lemma \ref{wz}, we have
\[
D(h_\beta(x))=\left\{\begin{array} {r@{\;}l}
D(12x-4x^2-3x^3)-D(2-x)\quad & \hbox{when}\quad \beta=0\\
D(12-3x-2x^2)-D(2-x)\quad & \hbox{when}\quad \beta=1
\end{array}
\right.
\]
for $x\in (0,1)$. By an elementary calculation, we get
\[
\left\{\begin{array} {r@{\;}l}
D(12x-4x^2-3x^3)=\frac{-48-144x+12x^2}{(12-4x-3x^2)^2}\\
D(2-x)=\frac{-2}{(2-x)^2}\\
D(12-3x-2x^2)=\frac{-36-96x+6x^2}{(12-3x-2x^2)^2}.
\end{array}
\right.
\]
Consequently,
\[
D(h_\beta(x))=\left\{\begin{array} {r@{\;}l}
\frac{2g_\beta(x)}{(12-4x-3x^2)^2(2-x)^2}\quad & \hbox{when}\quad \beta=0\\
\frac{2g_\beta(x)}{(12-3x-2x^2)^2(2-x)^2}\quad & \hbox{when}\quad
\beta=1,
\end{array}
\right.
\]
where
\[
g_\beta(x)=\left\{\begin{array} {r@{\;}l}
48-288x+232x^2-72x^3+15x^4\quad & \hbox{when}\quad \beta=0\\
72-192x+147x^2-48x^3+7x^4\quad & \hbox{when}\quad \beta=1.
\end{array}
\right.
\]

Now, under $x\in (0,1)$ we find
$$
g_0'(x)=-288+464x-216x^2+60x^3\quad \&\quad
g_0''(x)=464-432x+180x^2.
$$
Clearly, $g_0''(x)$ is an open-upward parabola with the axis of
symmetry $x=\frac{6}{5}>1$. By $g_0''(1)=212>0$ and the monotonicity
of $g_0''$ on $(0,1)$, we have $g_0''(x)>0$ for all $x\in (0,1)$.
Thus $g_0'$ is increasing on $(0,1)$. The following condition
$$
g_0'(0)=-288<0\quad \&\quad  g_0'(1)=20>0
$$
yields an $x_1\in (0,1)$ such that $g_0'(x)<0$ for $x\in(0,x_1)$ and
$g_0'(x)>0$ for $x\in (x_1,1)$. Since $g_0(0)=48$ and $g_0(1)=-65$,
there exists an $x_0\in (0,1)$ such that $g_0(x)>0$ for $x\in
(0,x_0)$ and $g_0(x)<0$ for $x\in (x_0,1)$. Thus the function $\log
x\mapsto\log h_0(x)$ is neither convex nor concave.

Similarly, under $x\in (0,1)$ we have
$$
g_1'(x)=-192+294x-144x^2+28x^3\quad \&\quad g_1''(x)=294-288x+84x^2.
$$
Obviously, $g_1''(x)$ is an open-upward parabola with the axis of
symmetry $x=\frac{12}{7}>1$. By $g_1''(1)=90>0$ and the monotonicity
of $g_1''$ on $(0,1)$, we have $g_1''(x)>0$ for all $x\in (0,1)$.
Thus $g_1'$ is increasing on $(0,1)$. The following condition
$$
g_1'(0)=-192<0\quad \&\quad  g_1'(1)=-14<0
$$
yields  $g_1'(x)<0$ for $x\in(0,1)$. Since $g_1(0)=72$ and
$g_1(1)=-14$, there exists an $x_0\in (0,1)$ such that $g_1(x)>0$
for $x\in (0,x_0)$ and $g_1(x)<0$ for $x\in (x_0,1)$. Thus the
function $\log x\mapsto\log h_1(x)$ is neither convex nor concave.
\end{proof}

\subsection{Log-convexity for $\mathsf{L}_{\alpha,\beta}(f,\cdot)$} Analogously, we can
establish the expected convexity for the mixed lengths.

\begin{theorem}\label{th4} Let $0\le\beta\le 1$ and $0<r<1$.

\item{\rm(i)} If $\alpha\in (-\infty,-3)$, then there exist $f, g\in H(\mathbb D)$ such that $\log r\mapsto\log\mathsf{L}_{\alpha,\beta}(f,r)$ is not
convex and $\log r\mapsto\log \mathsf{L}_{\alpha,\beta}(g,r)$ is not
concave.
\item{\rm(ii)} If $\alpha\in [-3,0]$, then $\log r\mapsto \log\mathsf{L}_{\alpha,1}(a_nz^n,r\big)$ is
convex for $a_n\not=0$ with $n\in\mathbb N$. Consequently, $\log
r\mapsto \log\mathsf{L}_{\alpha,1}(f,r)$ is convex for $f\in
U(\mathbb D)$.

\item{\rm(iii)} If $\alpha\in (0,\infty)$, then $\log r\mapsto\log\mathsf{L}_{\alpha,\beta}(a_nz^n,r)$ is not convex
for $a_n\not=0$ and $n\in \mathbb N$.
\end{theorem}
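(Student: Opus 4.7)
The approach mirrors the proof of Theorem \ref{th3}, with the key arithmetic change that the monomial exponent $n-\beta$ (area side) is replaced by $(n-\beta)/2$ (length side). Concretely, since $L(z^n, t) = 2\pi t^n$, one has $\Phi_{L,\beta}(z^n, t) = (2\pi)^{1-\beta} t^{n-\beta}$, and the substitution $u = t^2$ yields
\[
\mathsf{L}_{\alpha,\beta}(z^n, r) = (2\pi)^{1-\beta}\,\frac{f_{(n-\beta)/2}(r^2)}{f_0(r^2)}
\]
with $f_\lambda$ and $\Delta(\lambda,\cdot)$ as in the proof of Theorem \ref{th3}. By Lemma \ref{wz}(i)-(ii), the log-convexity of $\log r \mapsto \log \mathsf{L}_{\alpha,\beta}(z^n, r)$ is equivalent to $\Delta((n-\beta)/2, x) \geq 0$ on $(0,1)$. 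Thus the entire $\Delta$-apparatus transplants under the substitution $\lambda = (n-\beta)/2$.

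Parts (i) and (iii) then follow the same limit computation as in Theorem \ref{th3}: the boundary value $\lim_{x\to 1}\Delta(\lambda, x) = \lambda(\alpha+1)(\lambda+\alpha+2)/[(\alpha+2)^2(\alpha+3)]$ changes sign in the desired ways. For (i), I would pick $f=z$ (or $z^2$ if $\beta=1$) to force non-convexity, and $g=z^n$ with $n$ large enough that $(n-\beta)/2+\alpha+2>0$ to force non-concavity. For (iii), the same limit yields $\Delta((n-\beta)/2, x)<0$ near $x=1$. For the monomial case of (ii), split into three subcases exactly as in Theorem \ref{th3}(ii): $\alpha=0$ gives a log-linear expression in $r$; $-2\leq\alpha<0$ uses \cite[Proposition 4]{WZ} with $\lambda=(n-1)/2\geq 0$; and $-3\leq\alpha<-2$ uses \cite[Proposition 5]{WZ}, checking the threshold $(n-1)/2\geq -2-\alpha$ for $n$ sufficiently large while the small-$n$ cases (where $\mathsf{L}_{\alpha,1}(z,r)$ is constant in $r$) are trivially log-convex.

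The genuinely hard step is the passage from monomials to general $f\in U(\mathbb{D})$ in part (ii). For $\mathsf{A}_{\alpha,1}$ this was clean because $A(f,r)=\pi\sum_{n\geq 1}n|a_n|^2 r^{2n}$ realizes the area mean as a \emph{positive} combination of monomial means, and Lemma \ref{wz}(iii) then applies. No such positive-sum decomposition is available for $L(f,r)=2\pi r\,M_1(f',r)$, since $|f'|$ does not expand as a positive series of powers of $|z|$. My plan would be to reduce to monomials by a two-stage approximation: first use Hardy's convexity theorem applied to $f'\in H(\mathbb{D})$ to see that $\log t\mapsto\log\Phi_{L,1}(f,t)=\log M_1(f',t)$ is itself convex, then attempt to write $\Phi_{L,1}(f,\cdot)$ as a positive superposition of monomials $t^{2\lambda}$ in a Hausdorff-moment flavor, and finally extend Lemma \ref{wz}(iii) from sums to integrals to conclude. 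Failing a clean representation, the fallback is a direct $D$-notation computation for $\mathsf{L}_{\alpha,1}(f,r)$ which uses only the log-convexity of $M_1(f',\cdot)$ in $\log t$ against the weight $d\mu_\alpha$; this direct route is technically heavy and is precisely where I expect the main obstacle to lie.
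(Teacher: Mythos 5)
Your reduction of the monomial case to the $\Delta$-apparatus of \cite{WZ} with $\lambda=(n-\beta)/2$ is exactly what the paper does (the formula $\mathsf{L}_{\alpha,\beta}(z^n,r)=(2\pi)^{1-\beta}f_{(n-\beta)/2}(r^2)/f_0(r^2)$ and the sign analysis of $\lim_{x\to1}\Delta(\lambda,x)$ are correct), so parts (i), (iii) and the monomial half of (ii) are fine. The genuine gap is where you yourself locate it: the passage from monomials to general $f\in U(\mathbb D)$ in (ii). Your claim that ``no such positive-sum decomposition is available'' for the length mean is false, and your two fallbacks (a Hausdorff-moment representation with an integral version of Lemma \ref{wz}(iii), or a direct $D$-notation computation) are left entirely unexecuted, so the ``Consequently'' clause of (ii) is not proved.

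The missing idea is short: since $f$ is univalent, $f'$ is zero-free on the simply connected domain $\mathbb D$, hence admits a holomorphic square root $g=\sqrt{f'}=\sum_{n=0}^\infty b_nz^n$. Then $|f'|=|g|^2$ and Parseval's identity gives
\[
\Phi_{L,1}(f,t)=(2\pi t)^{-1}\int_{t\mathbb T}|g(z)|^2\,|dz|=\sum_{n=0}^\infty |b_n|^2t^{2n},
\]
which is precisely the positive superposition of monomials $t^{2n}$ you were looking for --- structurally identical to the area expansion $\Phi_{A,1}(f,t)=\sum_n n|a_n|^2t^{2(n-1)}$, but now with integer exponents $\lambda=n\ge0$ (so the thresholds in \cite[Propositions 4--5]{WZ} are met for $\alpha\in[-3,0]$, the $n=0$ term being constant hence trivially log-convex). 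After this, Lemma \ref{wz}(iii) finishes the argument exactly as in Theorem \ref{th3}(ii); no extension of that lemma from sums to integrals is needed. Without this observation your proposal proves Theorem \ref{th4}(ii) only for monomials.
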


\begin{proof} The argument is similar to that for Theorem \ref{th3}
except using the following statement for $\alpha\in [-3,0]$ -- If
$f\in U(\mathbb D)$, then there exists $g(z)=\sum_{n=0}^\infty b_n
z^n$ such that $g$ is the square root of the zero-free derivative
$f'$ on $\mathbb D$ and $f'(0)=g^2(0)$, and hence
\begin{eqnarray*}
\Phi_{L,1}(f,t)&=&(2\pi t)^{-1}\int_{t\mathbb
T}|f'(z)||dz|\\
&=&(2\pi t)^{-1} \int_{t\mathbb T} |g(z)|^2|dz|\\
&=&\sum_{n=0}^\infty |b_n|^2 t^{2n}.
\end{eqnarray*}
\end{proof}

Our concluding example shows that under $0<\alpha<\infty$ and
$0\le\beta\le 1$ one cannot get that
$\log\mathsf{L}_{\alpha,\beta}(f,r)$ is convex or concave in $\log
r$ for all functions $f\in U(\mathbb D)$.

\begin{example} Let $\alpha=1$, $\beta\in\{0,1\}$, and $f(z)=(z+2)^3$. Then the function $\log r\mapsto\log\mathsf{L}_{\alpha,\beta}(f,r)$ is neither convex nor concave for $r\in (0,1)$.
\end{example}

\begin{proof} Clearly, we have

$$
f'(z)=3(z+2)^2\ \ \&\ \ f''(z)=6(z+2)
$$
as well as the Schwarizian derivative
$$
\left[\frac{f''(z)}{f'(z)}\right]'-\frac{1}{2}\left[\frac{f''(z)}{f'(z)}\right]^2=\frac{-4}{(z+2)^2}.
$$
It is easy to see that
$$
\sqrt{2}(1-|z|^2)\leq 2-|z|\quad\forall\quad  z\in \mathbb D.
$$
So,
$$
\left|\left[\frac{f''(z)}{f'(z)}\right]'-\frac{1}{2}\left[\frac{f''(z)}{f'(z)}\right]^2\right|=\frac{4}{|z+2|^2}\leq
\frac{4}{(2-|z|)^2}\leq \frac{2}{(1-|z|^2)^2}.
$$
By Lemma \ref{uni} (ii), $f$ belongs to $U(\mathbb D)$.
Consequently,
$$
L(f,t)=\int_0^{2\pi}|f'(te^{i\theta})|t\, d\theta=6\pi t(t^2+4)
$$
and
\[
\int_0^r\Phi_{L,\beta}(f,t)\,d\mu_1(t)=\left\{\begin{array}
{r@{\;}l}
12\pi\Big(\frac{4}{3}r^3-\frac{3}{5}r^5-\frac{1}{7}r^7\Big)
\quad & \hbox{when}\quad \beta=0\\
12r^2-\frac{9}{2}r^4-r^6\quad & \hbox{when}\quad \beta=1.
\end{array}
\right.
\]
Note that $\nu_1(r)=r^2-\frac{r^4}{2}$. So,
\[
\mathsf{L}_{1,\beta}(f,r)=\left\{\begin{array} {r@{\;}l}
\frac{24\pi(140r-63r^3-15r^5)}{105(2-r^2)}
\quad & \hbox{when}\quad \beta=0\\
\frac{24-9r^2-2r^4}{2-r^2} \quad & \hbox{when}\quad \beta=1.
\end{array}
\right.
\]

To gain our conclusion, we only need to consider the logarithmic
convexity of the function
\[
 h_\beta(x)=\left\{\begin{array} {r@{\;}l}
\frac{140x-63x^3-15x^5}{2-x^2}\quad & \hbox{when}\quad \beta=0\\
\frac{24-9x-2x^2}{2-x}\quad & \hbox{when}\quad \beta=1.
\end{array}
\right.
\]

{\it Case 1}: $\beta=0$. Applying the definition of $D$-notation, we
obtain
$$
D(140x-63x^3-15x^5)=\frac{-35280 x-33600
x^3+3780x^5}{(140-63x^2-15x^4)^2}
$$
and
$$
D(2-x^2)=\frac{-8x}{(2-x^2)^2},
$$
whence reaching
$$
D\big(h_0(x)\big)=D(140x-63x^3-15x^5)-D(2-x^2)=\frac{4xg_0(x)}{(140-63x^2-15x^4)^2(2-x^2)^2},
$$
where
$$
g_0(x)=3920-33600x^2+28098x^4-8400x^6+1395x^8.
$$
Obviously,
$$
g_0(0)=3920>0\quad\&\quad g_0(1)=-8587<0.
$$
Now letting $s=x^2$, we get
$$
g_0(x)=G_0(s)=3920-33600s+28098s^2-8400s^3+1395s^4,
$$
and
$$
G'_0(s)=-33600+56196s-25200s^2+5580s^3\ \&\
G''_0(s)=56196-50400s+16740s^2.
$$
Since the axis of symmetry of $G''_0$ is $s=\frac{140}{93}>1$,
$G''_0$ is decreasing on $(0,1)$. Due to $G''_0(1)=22536>0$, we have
$G''_0(s)>0$ for all $s\in (0,1)$, i.e., $G'_0(s)$ is increasing on
$(0,1)$. By
$$
G'_0(0)=-33600<0\quad\&\quad G'_0(1)=2976>0,
$$
we conclude that there exists an $s_0\in(0,1)$ such that $G'_0(s)<0$
for $s\in(0,s_0)$ and $G'_0(s)>0$ for $s\in (s_0,1)$. Then there
exists an $x_0\in (0,1)$ such that $g_0(x)$ is decreasing for $x\in
(0,x_0)$ and $g_0(x)$ is increasing for $x\in(x_0,1)$. Thus there
exists an $x_1\in (0,1)$ such that $g_0(x)>0$ for $x\in(0,x_1)$ and
$g_0(x)<0$ for $x\in(x_1,1)$. As a result, we find that $\log
r\mapsto\log\mathsf{L}_{\alpha,0}(f,r)$ is neither concave nor
convex.

{\it Case 2}: $\beta=1$. Again using the $D$-notation, we obtain
$$
D(24-9x-2x^2)=\frac{-216-192x+18x^2}{(24-9x-2x^2)^2}
$$
and
$$
D(2-x)=\frac{-2}{(2-x)^2},
$$
whence deriving
$$
D\big(h_1(x)\big)=D(24-9x-2x^2)-D(2-x)=\frac{2g_1(x)}{(24-9x-2x^2)^2(2-x)^2},
$$
where
$$
g_1(x)=144-384x+297x^2-96x^3+13x^4.
$$
Now we have
$$
g'_1(x)=-384+594x-288x^2+52x^3\quad \&\quad
g''_1(x)=594-576x+156x^2.
$$
Since the axis of symmetry of $g''_1(x)$ is $x=\frac{24}{13}>1$,
$g''_1(x)$ is decreasing on $(0,1)$. Due to $g''_1(1)=174>0$, we
have $g''_1(x)>0$ for all $x\in (0,1)$, i.e., $g'_1(x)$ is
increasing on $(0,1)$. By
$$
g'_1(0)=-384<0\quad\&\quad g'_1(1)=-26<0,
$$
we conclude that $g'_1(x)<0$ for $x\in (0,1)$. Obviously,
$$
g_1(0)=144>0\quad \&\quad g_1(1)=-26<0.
$$
Hence there exists an $x_0\in (0,1)$ such that $g_1(x)>0$ for $x\in
(0,x_0)$ and $g_1(x)<0$ for $x\in (x_0,1)$. Consequently, we find
that $\log r\mapsto\log\mathsf{L}_{\alpha,\beta=1}(f,r)$ is neither
concave nor convex.
\end{proof}

\end{document}